\documentclass[11pt,twoside]{amsart}

\usepackage[T1]{fontenc}
\usepackage[utf8]{inputenc}
\usepackage{mathpazo}
\usepackage{amssymb,amsmath,amsthm}
\usepackage{mathtools}
\usepackage{indentfirst}
\usepackage{url}
\usepackage[noadjust]{cite}
\usepackage{microtype}

\allowdisplaybreaks[4]

\usepackage[colorlinks=true,linkcolor=blue,anchorcolor=red,
citecolor=blue,urlcolor=blue,linktocpage,pdfpagelabels,
bookmarksnumbered,bookmarksopen]{hyperref}

\usepackage{mathtools}
\usepackage{xcolor,cancel}

\makeatletter
\let\FL@setaddresses\@setaddresses
\renewcommand{\@setaddresses}{%
  \par\addvspace{\bigskipamount}\noindent
  \begin{minipage}{\textwidth}
    \FL@setaddresses
  \end{minipage}\par
}
\makeatother

\newtheorem{Th}{Theorem}[section]

\newtheorem{Lem}[Th]{Lemma}
\newtheorem{Cor}[Th]{Corollary}

\theoremstyle{definition}
\newtheorem{Rem}[Th]{Remark}
\theoremstyle{plain}

\newtheorem{Que}[Th]{Question}

\newcommand{\R}{\mathbb{R}}

\newcommand{\C}{\mathbb{C}}

\numberwithin{equation}{section}

\newcommand{\loc}{\mathrm{loc}}

\newcommand{\Sph}{\mathbb{S}}
\newcommand{\Tclass}{\mathcal{T}}
\newcommand{\Xs}{\mathcal{X}_s}
\newcommand{\norm}[2]{\lVert #1\rVert_{#2}}
\DeclareFontEncoding{FMS}{}{}
\DeclareFontSubstitution{FMS}{futm}{m}{n}
\DeclareFontEncoding{FMX}{}{}
\DeclareFontSubstitution{FMX}{futm}{m}{n}
\DeclareSymbolFont{fouriergauge}{FMS}{futm}{m}{n}
\DeclareSymbolFont{fouriergaugelarge}{FMX}{futm}{m}{n}
\DeclareMathDelimiter{\VERT}
  {\mathord}{fouriergauge}{152}{fouriergaugelarge}{147}
\newcommand{\gnorm}[2]{\VERT #1\VERT_{#2}}
\newcommand{\abs}[1]{\lvert #1\rvert}
\newcommand{\dd}{\,\mathrm{d}}
\newcommand{\exd}{\mathrm d}
\newcommand{\ip}[2]{\langle #1,#2\rangle}

\title[Explicit counterexamples to Frank--Loss conjectures]{Explicit counterexamples to Frank--Loss optimizer conjectures
for critical Dirac and vector Sobolev inequalities}

\author[B. Bieganowski]{Bartosz Bieganowski}
\author[J. Mederski]{Jarosław Mederski}

\address{B. Bieganowski\newline\indent
	Faculty of Mathematics, Informatics and Mechanics, \newline\indent
	University of Warsaw, \newline\indent
	ul. Banacha 2, 02-097 Warsaw, Poland}
\email{\href{mailto:bartoszb@mimuw.edu.pl}{bartoszb@mimuw.edu.pl}}

\address{J. Mederski\newline\indent
    Institute of Mathematics, \newline\indent
    Polish Academy of Sciences, \newline\indent
    ul.~Śniadeckich 8, 00-656 Warsaw, Poland, \newline\indent
    and \newline\indent
	Faculty of Mathematics and Computer Science, \newline\indent
	Nicolaus Copernicus University, \newline\indent
	ul. Chopina 12/18, 87-100 Toruń, Poland}
\email{\href{mailto:jmederski@impan.pl}{jmederski@impan.pl}}

\keywords{Critical Sobolev inequality, Dirac operator,
exterior derivative, differential forms, zero mode, Loss--Yau potential,
gauge invariance, Aubin--Talenti optimizer, higher dimensions}
\subjclass[2020]{Primary 35Q60; Secondary 35J92, 35Q41, 81V45}

\begin{document}

\begin{abstract}
Frank and Loss proposed critical Sobolev inequalities for the Dirac operator
and for vector fields. They conjectured optimality of their spinorial family
and wrote that it is conceivable that the higher-dimensional Loss--Yau
potential is an optimizer. Wang and Zhang recently disproved both optimizer
claims using direct comparison and instability arguments. We compute
closed-form quotients for two elementary families of explicit competitors
with strictly smaller quotients. For every \(d\geq2\), the spinorial competitor is a constant spinor
multiplied by a scalar Aubin--Talenti optimizer. For every \(d\geq3\), the
vector competitor is the planar azimuthal field
\[
 A_d(x)=\frac{d}{(1+\lvert x\rvert^2)^2}
 (-x_2,x_1,0,\ldots,0).
\]
It lies in the nonlinear gauge and its quotient is computed in closed form.
For every odd \(d\geq3\), it gives an explicit proof of nonoptimality of
the higher-dimensional Loss--Yau--Dunne--Min potential. In dimension three,
this field is a specific member of the competitor family of Wang and Zhang:
it is a linear combination of the Loss--Yau potential and its reflected
twin. Its axisymmetry gives a negative answer to their question about
minimization within the \(\mathrm{SO}(2)\times 1\)-invariant class.

\end{abstract}

\maketitle

\section{Introduction and main results}

Let
\[
 \boldsymbol{\sigma}=(\sigma_1,\sigma_2,\sigma_3),
 \qquad
 \sigma_1=\begin{pmatrix}0&1\\1&0\end{pmatrix},\quad
 \sigma_2=\begin{pmatrix}0&-i\\i&0\end{pmatrix},\quad
 \sigma_3=\begin{pmatrix}1&0\\0&-1\end{pmatrix},
\]
be the Pauli matrices. Frank and Loss considered the optimal constants in 
\begin{equation}\label{eq:spin-ineq}
 \norm{\boldsymbol{\sigma}\cdot(-i\nabla)\psi}{3/2}
 \geq C_s\norm{\psi}{3}
\end{equation}
and in the \emph{Sobolev--curl inequality}
\begin{equation}\label{eq:vector-ineq-gauge}
 C_v\norm{A}{3}
 \leq \norm{\nabla\wedge A}{3/2},
 \qquad \nabla\cdot(\abs{A}A)=0,
\end{equation}
where all norms are taken over $\R^3$; see
\cite{FrankLossZero,FrankLossVector}. Here $\nabla\wedge A$ denotes the curl of $A$.
All derivatives are understood distributionally. The natural spinorial
domain and the corresponding optimal constant are
\begin{equation*}
\begin{aligned}
 \Xs&:=\{\psi\in L^3(\R^3;\C^2):
 \boldsymbol{\sigma}\cdot(-i\nabla)\psi\in L^{3/2}(\R^3;\C^2)\},\\
 C_s&:=\inf_{\psi\in\Xs\setminus\{0\}}
 \frac{\norm{\boldsymbol{\sigma}\cdot(-i\nabla)\psi}{3/2}}
      {\norm{\psi}{3}}.
\end{aligned}
\end{equation*}
The nonlinear divergence condition in \eqref{eq:vector-ineq-gauge} fixes the gauge naturally associated with the $L^3$ norm. We follow the notation $C_s,C_v$ from \cite{FrankLossVector}. 

Let
\begin{equation*}
 Y:=\{A\in L^3(\R^3;\R^3):\nabla\wedge A\in L^{3/2}(\R^3;\R^3)\}
\end{equation*}
and define the \emph{gauge seminorm}
\begin{equation*}
 \gnorm{A}{3}:=
 \inf_{\phi\in\dot W^{1,3}(\R^3)}\norm{A-\nabla\phi}{3}.
\end{equation*}
For every $n\geq2$ and $1<q<\infty$, $\dot W^{1,q}(\R^n)$ denotes the real-valued locally
integrable functions with weak gradient in $L^q(\R^n)$, modulo constants.
The minimization problem used by Frank and Loss is
\begin{equation}\label{eq:FL-S}
 \mathcal Q(A):=
 \frac{\norm{\nabla\wedge A}{3/2}^{3/2}}
      {\gnorm{A}{3}^{3/2}},
 \qquad
 S:=\inf_{\substack{A\in Y\\ \nabla\wedge A\neq0}}\mathcal Q(A),
 \qquad C_v=S^{2/3}.
\end{equation}
The subspace of $L^3$ gradients is closed: an $L^3$ limit of gradients is
curl-free distributionally and hence, by the distributional Poincar\'e lemma
on the simply connected space $\R^3$, is again the gradient of a
locally integrable function. A closed convex subspace of the reflexive space
$L^3$ admits a best approximation, and strict convexity makes it unique, see \cite{MederskiSzulkin}.
Thus there is a minimizing representative $A-\nabla\phi_A$ in every gauge
class. Its
Euler--Lagrange equation is
$\nabla\cdot(\abs{A-\nabla\phi_A}(A-\nabla\phi_A))=0$; conversely,
Lemma~\ref{lem:gauge} characterizes such a representative as minimizing.
Thus the gauge formulation in \eqref{eq:FL-S} is equivalent to
\eqref{eq:vector-ineq-gauge}; see also \cite{FrankLossVector}.
The infimum of the gauge quotient is unchanged if one restricts to smooth
compactly supported vector fields, by the density argument in
Section~\ref{sec:FL-vector-all-d}.
In the notation of \cite{MederskiSzulkin},
$S=S_{3/2,\mathrm{curl}}$. The Sobolev--curl framework for the quadratic
curl--curl problem, corresponding to $p=2$, was for the first time developed in
\cite{MederskiSzulkin2021} and later extended to the $p$-curl setting with $1<p<3$, hence
including $p=3/2$, in \cite{MederskiSzulkin}.

The appearance of zero modes originates in the stability
theory of Coulomb systems coupled to a classical magnetic field. Fr\"ohlich,
Lieb, and Loss studied the one-electron atom with the magnetic-field
self-energy included and established stable and unstable regimes separated by a
critical nuclear charge \cite{FrohlichLiebLoss}. Loss and Yau subsequently
constructed magnetic fields of finite self-energy for which the Pauli operator
has a zero-energy bound state; by a scaling argument, such a zero mode leads to
the collapse of the corresponding one-electron atom for sufficiently large
nuclear charge \cite{LossYau}. Their explicit construction is the source of the
following distinguished pair.
The {\em Loss--Yau zero-mode pair} is given by
\begin{equation}\label{eq:LY-spinor}
 \psi_{\mathrm{LY}}(x)
 =\frac{1+i\boldsymbol{\sigma}\cdot x}{(1+\abs{x}^2)^{3/2}}\eta
\end{equation}
and
\begin{equation}\label{eq:LY-potential}
 A_{\mathrm{LY}}(x)
 =\frac{3}{(1+\abs{x}^2)^2}
 ((1-\abs{x}^2)w+2(x\cdot w)x+2w\wedge x),
\end{equation}
where $\eta\in\C^2$ is a constant spinor with $\abs{\eta}=1$ and
$w=\ip{\eta}{\boldsymbol{\sigma}\eta}$; in particular, $\abs{w}=1$.
In Remark~9.1 of \cite{FrankLossZero}, Frank and Loss conjectured that this
spinor and vector potential are optimizers for \eqref{eq:spin-ineq} and
\eqref{eq:vector-ineq-gauge}; see also \cite{FrankLossVector,FrankLossSharp}. The
corresponding quotient values for the
 Loss--Yau pair are
\begin{equation}\label{eq:FL-candidates}
 C_s^{\mathrm{LY}}
 =\frac32\abs{\Sph^3}^{1/3},
 \qquad
 C_v^{\mathrm{LY}}
 =2\abs{\Sph^3}^{1/3},
 \qquad \abs{\Sph^3}=2\pi^2,
\end{equation}
where $\Sph^3$ is the unit sphere in $\mathbb{R}^4$.

We shall also use the axisymmetric azimuthal class $\Tclass$ introduced in
\cite{MederskiSzulkin} for the Sobolev--curl inequality.
Write $(\rho,\theta,z)$ for cylindrical coordinates and
$(e_\rho,e_\theta,e_3)$ for the corresponding orthonormal frame.
The class consists of fields
\[
 A=a(\rho,z)e_\theta=\frac{a(\rho,z)}{\rho}(-x_2,x_1,0),
 \qquad \rho := \sqrt{x_1^2+x_2^2}>0.
\]
Every $A\in Y\cap\Tclass$ satisfies
$\nabla\cdot(\abs{A}A)=0$ distributionally. Indeed, for
$\varphi\in C_c^\infty(\R^3)$, cylindrical coordinates give
\[
 \int_{\R^3}\abs{A}A\cdot\nabla\varphi\dd x
 =\int_{\R}\int_0^\infty\abs{a}a
   \left(\int_0^{2\pi}\partial_\theta\varphi\dd\theta\right)
   \dd\rho\dd z=0.
\]
Fubini's theorem applies because
$\abs{\partial_\theta\varphi}\leq\rho\norm{\nabla\varphi}{\infty}$
and $A\in L^3$ implies $\abs{A}^2\in L^1_{\loc}$.
Hence Lemma~\ref{lem:gauge} gives
$\gnorm{A}{3}=\norm{A}{3}$ throughout this class.
This is one of the fixed-point classes considered in
\cite{MederskiSzulkin}.
Let $C_v^{\Tclass}$ denote the best constant in \eqref{eq:vector-ineq-gauge} restricted to this class; equivalently,
\begin{equation*}
 S_{3/2,\mathrm{curl}}^{\Tclass}
 :=\inf_{\substack{A\in Y\cap\Tclass\\ \nabla\wedge A\neq0}}
 \frac{\norm{\nabla\wedge A}{3/2}^{3/2}}
      {\gnorm{A}{3}^{3/2}},
 \qquad
 C_v^{\Tclass}
 =(S_{3/2,\mathrm{curl}}^{\Tclass})^{2/3}.
\end{equation*}
By \cite{MederskiSzulkin}, the restricted infimum
$S_{3/2,\mathrm{curl}}^{\Tclass}$ is attained by some nonzero field in
$Y\cap\Tclass$. We will not use this existence result; in
Section~\ref{sec:azimuthal-field} we construct a fully explicit competitor
in this class.

Our main statement is the following.

\begin{Th}\label{thm:main}
The optimal constants satisfy
\begin{align}
 C_s
 &\leq (18\pi)^{1/3}
 =\left(\frac{8}{3\pi}\right)^{1/3}
   \frac32\abs{\Sph^3}^{1/3}
 <\frac32\abs{\Sph^3}^{1/3},
 \label{eq:main-Cs}\\
 C_v
 &\leq C_v^{\Tclass}
 \leq 2\left(\frac{80\pi^2}{49}\right)^{1/3}
 =2\left(\frac{40}{49}\right)^{1/3}\abs{\Sph^3}^{1/3}
 <2\abs{\Sph^3}^{1/3}.
 \label{eq:main-Cv}
\end{align}
Equivalently,
\begin{equation}\label{eq:main-power}
 S_{3/2,\mathrm{curl}}
 \leq S_{3/2,\mathrm{curl}}^{\Tclass}
 \leq \frac{8\sqrt{10}}7\pi
 <4\pi.
\end{equation}
Consequently, neither $\psi_{\mathrm{LY}}$ nor $A_{\mathrm{LY}}$ is a global
optimizer of its respective inequality.
\end{Th}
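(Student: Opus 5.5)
The plan is to exhibit two explicit competitors, compute their quotients in closed form, and compare them with the Loss--Yau values \eqref{eq:FL-candidates}. In each case the only work is an elementary radial or spherical integral.

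\emph{Spinor bound.} Take $\psi=u\eta$ with $\abs{\eta}=1$ constant and $u$ the scalar Aubin--Talenti optimizer for $\dot W^{1,3/2}(\R^3)\hookrightarrow L^3$, namely $u(x)=(1+\abs{x}^3)^{-1}$. Since $u$ is real,
\[
 \boldsymbol{\sigma}\cdot(-i\nabla)(u\eta)=-i(\boldsymbol{\sigma}\cdot\nabla u)\eta ,
\]
and $(\boldsymbol{\sigma}\cdot v)^2=\abs{v}^2$ for real $v$. Hence the pointwise modulus is exactly $\abs{\nabla u}$, so $\psi\in\Xs$ and its quotient equals $\norm{\nabla u}{3/2}/\norm{u}{3}$. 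With $s=r^3$ both integrals become Beta integrals:
\[
 \norm{u}{3}^3=\frac{4\pi}{3}\int_0^\infty\frac{ds}{(1+s)^3}=\frac{2\pi}{3},
 \qquad
 \norm{\nabla u}{3/2}^{3/2}=4\pi\,3^{1/2}\int_0^\infty\frac{s\dd s}{(1+s)^3}=2\pi\sqrt3 .
\]
This gives $(2\pi\sqrt3)^{2/3}(2\pi/3)^{-1/3}=(18\pi)^{1/3}$. Cubing shows that the comparison with $\tfrac32\abs{\Sph^3}^{1/3}$ reduces to $18\pi<\tfrac{27}{4}\pi^2$, i.e.\ to the ratio $8/(3\pi)<1$. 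This proves \eqref{eq:main-Cs}.

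\emph{Vector bound.} Take $A=f(\abs{x})(-x_2,x_1,0)$ with $f=3(1+\abs{x}^2)^{-2}$, which is the case $d=3$ of $A_d$. This field lies in $\Tclass$, is smooth, and decays like $\abs{x}^{-3}$, so $A\in Y$. By the discussion preceding the theorem and Lemma~\ref{lem:gauge}, $\gnorm{A}{3}=\norm{A}{3}$, and the first inequality $C_v\le C_v^{\Tclass}$ is trivial. A direct computation gives
\[
 \nabla\wedge A=\frac{12}{(1+r^2)^3}\Bigl(x_1x_3,\;x_2x_3,\;\tfrac{1+r^2}{2}-\rho^2\Bigr).
\]
Writing $\rho=r\sin\theta$, $x_3=r\cos\theta$ and $c=(1+r^2)/2$, the squared bracket simplifies to $c^2-r^2\sin^2\theta$. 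For the denominator,
\[
 \norm{A}{3}^3=27\int(1+r^2)^{-6}\rho^3\dd x ,
\]
which is a Beta integral. For the numerator,
\[
 \norm{\nabla\wedge A}{3/2}^{3/2}=12^{3/2}\cdot 2\pi\int_0^\infty\int_0^\pi(1+r^2)^{-9/2}\bigl(c^2-r^2\sin^2\theta\bigr)^{3/4}r^2\sin\theta\dd\theta\dd r .
\]
Dividing then yields $S^{\Tclass}_{3/2,\mathrm{curl}}\le\mathcal Q(A)$, to be identified with $\tfrac{8\sqrt{10}}{7}\pi$. The conversions between \eqref{eq:main-power} and \eqref{eq:main-Cv} use $C_v=S^{2/3}$ and $\abs{\Sph^3}=2\pi^2$. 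The final strict inequality $\tfrac{8\sqrt{10}}7<4$ holds because it is equivalent to $640<784$.

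\emph{Main obstacle.} The hard step is evaluating the numerator exactly, because of the fractional power $(c^2-r^2\sin^2\theta)^{3/4}$. I would first try a change of variables adapted to the conformal (stereographic) structure: map $\R^3$ to $\Sph^3$, where $(1+r^2)^{-1}$ factors become the conformal weight and the field becomes a Hopf-type field. In that picture I expect $\abs{\nabla\wedge A}$ to reduce to an expression in a single angular coordinate on $\Sph^3$, after which the integral is elementary. Failing that, I would substitute $t=\cos\theta$ and $r=\tan(\alpha/2)$, integrate in $t$ first (a binomial-type integral), and reduce to Beta functions.

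\emph{Conclusion.} Since both displayed competitors have quotients strictly below the Loss--Yau values \eqref{eq:FL-candidates}, neither $\psi_{\mathrm{LY}}$ nor $A_{\mathrm{LY}}$ is a global optimizer.
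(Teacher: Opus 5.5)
Your spinor half is complete and coincides with the paper's argument. The set-up of your vector half also matches the paper: the field $A_{\Tclass}$, its curl, membership in $Y\cap\Tclass$, and $\gnorm{A_{\Tclass}}{3}=\norm{A_{\Tclass}}{3}$ via Lemma~\ref{lem:gauge}. The gap is that you never evaluate the two integrals. The value $\frac{8\sqrt{10}}{7}\pi$ is only ``to be identified with'', so the strict bound $<4\pi$ is asserted rather than proved. With it, \eqref{eq:main-Cv}, \eqref{eq:main-power} and the nonoptimality of $A_{\mathrm{LY}}$ are also unproved.

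The denominator is routine: $27\cdot2\pi\cdot\frac{3\pi}{8}\cdot\frac12\mathrm B(3,3)=\frac{27\pi^2}{80}$. The numerator carries the content, and your fallback route does not work as described. With $r=\tan(\alpha/2)$ and $t=\cos\theta$ one gets $J:=\int_{\R^3}s^{-3}(1-4\rho^2/s^2)^{3/4}\dd x=\frac\pi4\int_0^\pi\int_{-1}^1(\cos^2\alpha+t^2\sin^2\alpha)^{3/4}\sin^2\alpha\dd t\dd\alpha$. The inner $t$-integral is a binomial integral that fails all three of Chebyshev's elementarity conditions ($p=\frac34$, $\frac{m+1}{n}=\frac12$, $\frac{m+1}{n}+p=\frac54$). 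So integrating in $t$ first leaves a hypergeometric function of $\alpha$, not a Beta integral.

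The missing step is exactly what your plan A anticipates, and it is how the paper closes the argument. Under inverse stereographic projection, $s^{-3}\dd x=2^{-3}\dd\omega_{\Sph^3}$ and $4\rho^2/s^2=y_1^2+y_2^2=:T$, so $\abs{\nabla\wedge A_{\Tclass}}^{3/2}=6^{3/2}s^{-3}(1-T)^{3/4}$. On $\Sph^3$ the function $T$ is uniformly distributed on $[0,1]$; this is the case $d=3$ of \eqref{eq:spherical-slicing-d}. In Euclidean form, the torus coordinates \eqref{eq:analytic-change-J}, in which $t=T$, give $s^{-3}\dd x=\frac1{16}\dd t\dd\theta\dd\varphi$. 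Hence $J=\frac18\cdot2\pi^2\int_0^1(1-t)^{3/4}\dd t=\frac{\pi^2}{7}$ and the numerator is $6^{3/2}\pi^2/7$. Dividing by $(27\pi^2/80)^{1/2}$ gives exactly $\frac{8\sqrt{10}}{7}\pi$.

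In your $(\alpha,t)$ variables the same fact is the substitution $(y_3,y_4)=(t\sin\alpha,\cos\alpha)$. Its Jacobian is $\sin^2\alpha$ and it maps onto the unit disc, turning $J$ into $\frac\pi4\int_{\{y_3^2+y_4^2<1\}}(y_3^2+y_4^2)^{3/4}\dd y_3\dd y_4$, which is elementary.

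Finally, on $\Sph^3$ the field is $\frac34(y_1\dd y_2-y_2\dd y_1)$, the Killing field rotating a single coordinate plane. This is a combination of the Hopf field and its reflected twin as in \eqref{eq:AT-reflected-twin}, not a Hopf field. A Hopf field has curl of constant length, whereas here the spherical curl has length $\frac32\sqrt{1-T}$, which is why the integrand depends on $T$ alone.
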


The theorem does not determine the exact values of the optimal constants.
It is consistent with the existence and compactness results in
\cite{FrankLossVector}, as well as with Theorem~1 and the classification of
equality cases in \cite{FrankLossSharp}. Wang and Zhang recently proved the
spinorial nonoptimality in every $d\geq2$ \cite{WangZhangSpinor} and the
vector-potential nonoptimality in every odd $d\geq3$
\cite[Theorem~1.3]{WangZhangConstants}. In particular, in dimension three,
$C_s<\frac32\abs{\Sph^3}^{1/3}$ and
$C_v<2\abs{\Sph^3}^{1/3}$. Our contribution is to provide two elementary
explicit families with closed-form quotients and quantitative upper bounds
for $C_s$, $C_v$ and $S_{3/2,\mathrm{curl}}$.
Section~\ref{sec:FL-spinor-all-d}
treats the spinorial conjecture for every $d\geq2$, while
Section~\ref{sec:FL-vector-all-d} constructs a vector competitor for every
$d\geq3$ and disproves the higher-dimensional vector conjecture in every odd
dimension.

Theorem~1.3 of \cite{WangZhangConstants} establishes vector-potential
nonoptimality for the $1$-form problem with exponent $d/2$ in every odd
dimension by a second-variation argument on the sphere. In dimension three,
this problem also coincides with their middle-degree curl problem.
In \cite[Proposition~5.1]{WangZhangConstants} the authors give a direct comparison for the gauge quotient:
combining positive and negative first curl eigenforms and applying
H\"older's inequality produces explicit competitors with smaller quotients.
Conformal invariance transfers this comparison to $\R^3$.
In the notation of \cite[formula~(1.4)]{WangZhangConstants}, specialized
to dimension three, for $A\in Y$ with
$\nabla\wedge A\neq0$ the gauge quotient is
\begin{equation}\label{eq:WZ-J2}
 J_2(A)
 :=\frac{\norm{\nabla\wedge A}{3/2}^{2}}{\gnorm{A}{3}^{2}}
 =\mathcal Q(A)^{4/3}.
\end{equation}
Consequently, their strict inequality for $J_2$ is equivalent to
$C_v<2\abs{\Sph^3}^{1/3}$, or, equivalently, to
$S_{3/2,\mathrm{curl}}<4\pi$. The non-strict inequality
$S_{3/2,\mathrm{curl}}\leq4\pi$ was already known from
\cite{MederskiSzulkin}.

At the end of \cite{WangZhangConstants}, Wang and Zhang formulate an open
question and state that they believe the answer is affirmative. Their exact
wording is \cite[p.~24]{WangZhangConstants}: \emph{Are they minimizers of
$J_2$ within the class of invariant vector fields?} Here the pronoun refers
to the Loss--Yau solutions constructed in their Section~6.1.
The group and its action are specified in their formula~(6.45):
$g\in\{1\}\times\mathrm{SO}(n-1)$ acts by
$u_g(x)=g\,u(g^{-1}x)$. For $n=3$, these are rotations about the
distinguished axis, which we rotate from $e_1$ to $e_3$ below.

We now give a precise dimension-three formulation. After a rotation, take
$w=e_3$ in \eqref{eq:LY-potential}, and let
\[
 G:=\{R\in\mathrm{SO}(3):Re_3=e_3\}\simeq\mathrm{SO}(2),
 \qquad
 (R\cdot A)(x):=R A(R^{-1}x).
\]
Since rotations preserve scalar and vector products, for $w=e_3$ and
$R\in G$ formula~\eqref{eq:LY-potential} gives
\begin{equation}\label{eq:LY-equivariance}
 A_{\mathrm{LY}}(Rx)=R A_{\mathrm{LY}}(x),
\end{equation}
so the Loss--Yau potential is fixed by this action.

Define
\[
 Y^G
 :=\bigl\{A\in Y:\nabla\wedge A\neq0,\ 
       R\cdot A=A\ \text{for every }R\in G\bigr\}.
\]

\begin{Que}[Wang--Zhang, dimension three]
\label{question:WZ-symmetric}
Does the Loss--Yau potential minimize $J_2$ in the invariant class $Y^G$?
Equivalently,
\[
 J_2(A_{\mathrm{LY}})
 \stackrel{?}{=}
 \inf_{A\in Y^G}J_2(A).
\]
\end{Que}

A particular choice in their competitor family already yields a negative
answer to Question~\ref{question:WZ-symmetric}, as explained in
Section~\ref{sec:azimuthal-field}. The resulting field lies in the
axisymmetric azimuthal class. Our computation evaluates its quotient
exactly and gives the quantitative upper bound in
\eqref{eq:main-Cv}--\eqref{eq:main-power}.

In dimension three, let $J_{3/2,1}$ denote the functional introduced in \cite[Section~6]{WangZhangConstants}, restricted to the nonlinear-gauge
slice. Then
\[
 J_2=J_{3/2,1}^{2/3}.
\]
Thus, on the nonlinear-gauge slice, the two functionals agree up to a
positive power and therefore have the same minimizers.
The relation of our three-dimensional competitor to the two Killing forms
used by Wang and Zhang is made explicit in
\eqref{eq:AT-reflected-twin} below.

\section{Gauge minimization}

We recall the standard convexity characterization of the minimizing
representative in a gauge class, stated here for general $n$ and $q$;
compare the converse assertion in \cite[Lemma~2.3]{FrankLossVector}
for $n=q=3$.

\begin{Lem}\label{lem:gauge}
Let $n\geq2$, $1<q<\infty$, and $A\in L^q(\R^n;\R^n)$. If
\begin{equation}\label{eq:nonlinear-gauge-q}
 \nabla\cdot(\abs{A}^{q-2}A)=0
\end{equation}
in the sense of distributions, then
\[
 \inf_{\phi\in\dot W^{1,q}(\R^n)}\norm{A-\nabla\phi}{q}
 =\norm{A}{q}.
\]
\end{Lem}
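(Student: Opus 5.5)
The idea is to use convexity of $F(B):=\frac1q\norm{B}{q}^q$ on $L^q(\R^n;\R^n)$ together with its first-order (supporting hyperplane) inequality. The gauge condition \eqref{eq:nonlinear-gauge-q} says exactly that the derivative of $F$ at $A$ annihilates all gradients. Hence $A$ minimizes $F$ over the affine set $A+\{\nabla\phi\}$, which is the claim.

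\textbf{Step 1 (pointwise convexity).} For vectors $a,b\in\R^n$, convexity of $t\mapsto\frac1q\abs{t}^q$ composed with the Euclidean norm gives
\[
 \tfrac1q\abs{b}^q\geq\tfrac1q\abs{a}^q+\abs{a}^{q-2}a\cdot(b-a).
\]
We apply this with $a=A(x)$ and $b=A(x)-\nabla\phi(x)$, then integrate. Since $\abs{A}^{q-2}A\in L^{q'}$ and $\nabla\phi\in L^q$, Hölder's inequality makes every term integrable, and we obtain
\[
 \tfrac1q\norm{A-\nabla\phi}{q}^q\geq\tfrac1q\norm{A}{q}^q-\int_{\R^n}\abs{A}^{q-2}A\cdot\nabla\phi\dd x .
\]

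\textbf{Step 2 (the linear term vanishes).} We need
\[
 \int_{\R^n}\abs{A}^{q-2}A\cdot\nabla\phi\dd x=0
 \qquad\text{for every }\phi\in\dot W^{1,q}(\R^n).
\]
The distributional hypothesis gives this only for $\phi\in C_c^\infty$. Write $V:=\abs{A}^{q-2}A\in L^{q'}$. The map $\phi\mapsto\int V\cdot\nabla\phi$ is continuous with respect to $\norm{\nabla\phi}{q}$, so it suffices to show that $\{\nabla\varphi:\varphi\in C_c^\infty\}$ is dense in $\{\nabla\phi:\phi\in\dot W^{1,q}\}$ in $L^q$. This density step is the main obstacle.

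\textbf{Step 3 (density).} To prove density, we first fix the additive constant by subtracting the average of $\phi$ over the annulus $B_{2R}\setminus B_R$. We then truncate with a cutoff $\chi_R$ satisfying $\abs{\nabla\chi_R}\lesssim 1/R$ and supported in that annulus, and finally mollify. The error term is
\[
 \norm{(\phi-c_R)\nabla\chi_R}{q}\lesssim R^{-1}\norm{\phi-c_R}{L^q(\text{annulus})}\lesssim\norm{\nabla\phi}{L^q(\text{annulus})}.
\]
The second inequality is the Poincaré inequality on the annulus, which is scale invariant. The right-hand side tends to $0$ as $R\to\infty$ by dominated convergence. This works for every $1<q<\infty$ and $n\geq2$, since the annulus is connected for $n\geq2$; no Sobolev embedding is needed. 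Alternatively, one can appeal to the standard fact that $C_c^\infty$ is dense in $\dot W^{1,q}$.

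\textbf{Step 4 (conclusion).} By Step 2 the linear term is zero, so Step 1 gives $\norm{A-\nabla\phi}{q}\geq\norm{A}{q}$ for every $\phi\in\dot W^{1,q}(\R^n)$. Taking $\phi=0$ shows the infimum is attained, so it equals $\norm{A}{q}$.
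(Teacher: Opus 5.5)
Your proposal is correct and follows essentially the paper's route: a convexity (supporting-hyperplane) inequality, with the linear term $\int\abs{A}^{q-2}A\cdot\nabla\phi\dd x$ killed by the distributional condition together with density of $C_c^\infty$ in $\dot W^{1,q}(\R^n)$. The only differences are minor. You apply convexity pointwise rather than through the derivative of the functional $F(\phi)=\frac1q\norm{A-\nabla\phi}{q}^q$. You also prove the density via the scale-invariant annulus Poincar\'e inequality, which the paper simply quotes as standard.
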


\begin{proof}
Consider the convex differentiable functional
\[
 F(\phi):=\frac1q\norm{A-\nabla\phi}{q}^q,
 \qquad \phi\in\dot W^{1,q}(\R^n).
\]
Since $\abs{A}^{q-2}A\in L^{q/(q-1)}$, the distributional condition
\eqref{eq:nonlinear-gauge-q}, H\"older's inequality, and the standard
density of $C_c^\infty(\R^n)$ in $\dot W^{1,q}(\R^n)$ with respect to
the gradient norm imply
\[
 F'(0)[\phi]
 =-\int_{\R^n}\abs{A}^{q-2}A\cdot\nabla\phi\dd x=0
 \qquad\text{for every }\phi\in\dot W^{1,q}(\R^n).
\]
Convexity now gives
\[
 F(\phi)\geq F(0)+F'(0)[\phi]=F(0).
\]
Thus the infimum is attained at $\phi=0$, which proves the assertion.
\end{proof}

\section{The spinorial trial function}

The key observation is that the restriction of \eqref{eq:spin-ineq} to spinors of constant direction contains the scalar critical Sobolev inequality for $p=3/2$.

\begin{Lem}
Let $\eta\in\C^2$ be a constant unit spinor and let
$0\neq u\in L^3(\R^3;\R)$ be weakly differentiable with
$\nabla u\in L^{3/2}(\R^3;\R^3)$. For $\psi=u\eta\in\Xs$ one has, almost
everywhere,
\begin{equation*}
 \abs{\boldsymbol{\sigma}\cdot(-i\nabla)\psi}=\abs{\nabla u}.
\end{equation*}
Consequently,
\begin{equation*}
 \frac{\norm{\boldsymbol{\sigma}\cdot(-i\nabla)\psi}{3/2}}
      {\norm{\psi}{3}}
 =\frac{\norm{\nabla u}{3/2}}{\norm{u}{3}}.
\end{equation*}
\end{Lem}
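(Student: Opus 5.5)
Since $\eta$ is constant, the weak derivatives of $\psi=u\eta$ are $\partial_j\psi=(\partial_j u)\eta$, so almost everywhere
\[
 \boldsymbol{\sigma}\cdot(-i\nabla)\psi=-i\,(\boldsymbol{\sigma}\cdot\nabla u)\,\eta .
\]
Here $\nabla u(x)\in\R^3$ is a real vector. In particular $\psi\in L^3$ with $\abs{\psi}=\abs{u}$, and the right-hand side lies in $L^{3/2}$ once its pointwise modulus is identified. This shows that $\psi\in\Xs$.

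The whole proof therefore reduces to a pointwise algebraic identity. For a real vector $b\in\R^3$ and any $\eta\in\C^2$, we have $\abs{(\boldsymbol{\sigma}\cdot b)\eta}=\abs{b}\,\abs{\eta}$. To see this, use the anticommutation relations $\sigma_j\sigma_k+\sigma_k\sigma_j=2\delta_{jk}$ together with the Hermitian property $\sigma_j^*=\sigma_j$. These give
\[
 (\boldsymbol{\sigma}\cdot b)^*(\boldsymbol{\sigma}\cdot b)=(\boldsymbol{\sigma}\cdot b)^2=\sum_{j,k}b_jb_k\sigma_j\sigma_k=\abs{b}^2 I,
\]
so that
\[
 \abs{(\boldsymbol{\sigma}\cdot b)\eta}^2=\ip{\eta}{(\boldsymbol{\sigma}\cdot b)^2\eta}=\abs{b}^2\abs{\eta}^2 .
\]
The reality of $b$ is essential here, since it is what makes $\boldsymbol{\sigma}\cdot b$ Hermitian. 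The factor $-i$ has modulus one, and $\abs{\eta}=1$. Applying the identity with $b=\nabla u(x)$ at almost every $x$ gives $\abs{\boldsymbol{\sigma}\cdot(-i\nabla)\psi}=\abs{\nabla u}$ almost everywhere.

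The identity of quotients then follows by integrating. We get $\norm{\boldsymbol{\sigma}\cdot(-i\nabla)\psi}{3/2}=\norm{\nabla u}{3/2}$, and from $\abs{\psi}=\abs{u}$ also $\norm{\psi}{3}=\norm{u}{3}$, which is nonzero because $u\neq0$.

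There is no real obstacle in this argument. The only points needing care are justifying the product rule with a constant spinor in the distributional sense, which is immediate, and remembering that $u$ is real-valued so that the Clifford identity applies.
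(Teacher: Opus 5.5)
Your proposal is correct and follows the same route as the paper's proof: the Pauli relations give $(\boldsymbol{\sigma}\cdot a)^2=\abs{a}^2 I$ for real $a$, so $\abs{(\boldsymbol{\sigma}\cdot\nabla u)\eta}^2=\ip{\eta}{(\boldsymbol{\sigma}\cdot\nabla u)^2\eta}=\abs{\nabla u}^2$, and integrating gives the equality of quotients. You also check that $\psi\in\Xs$ and that the product rule holds with a constant spinor; the paper leaves these implicit, but they do not change the argument.
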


\begin{proof}
The Pauli relations give
\[
 (\boldsymbol{\sigma}\cdot a)^2=\abs{a}^2I_{\C^2},
 \qquad a\in\R^3.
\]
Therefore
\[
 \abs{\boldsymbol{\sigma}\cdot(-i\nabla)(u\eta)}^2
 =\ip{\eta}{(\boldsymbol{\sigma}\cdot\nabla u)^2\eta}
 =\abs{\nabla u}^2\abs{\eta}^2.
\]
\end{proof}

For the scalar Aubin--Talenti optimizer at $p=3/2$ we take
\cite{Aubin,Talenti}
\begin{equation}\label{eq:Talenti}
 u(x):=\frac{1}{1+\abs{x}^3}.
\end{equation}
Direct computations show that
\begin{align*}
 \norm{u}{3}^3
 &=4\pi\int_0^\infty\frac{r^2}{(1+r^3)^3}\dd r
 =\frac{2\pi}{3},
 \\
 \norm{\nabla u}{3/2}^{3/2}
 &=4\pi 3^{3/2}\int_0^\infty\frac{r^5}{(1+r^3)^3}\dd r
 =2\pi\sqrt3.
\end{align*}
It follows that
\begin{equation*}
 \frac{\norm{\boldsymbol{\sigma}\cdot(-i\nabla)(u\eta)}{3/2}}
      {\norm{u\eta}{3}}
 =\frac{(2\pi\sqrt3)^{2/3}}{(2\pi/3)^{1/3}}
 =(18\pi)^{1/3}.
\end{equation*}
Since
\[
 18\pi<\frac{27}{4}\pi^2
 =\left(\frac32\abs{\Sph^3}^{1/3}\right)^3,
\]
we obtain \eqref{eq:main-Cs}.

\section{The Frank--Loss spinorial conjecture in arbitrary dimension}
\label{sec:FL-spinor-all-d}

We first fix the special-function and measure conventions used below. For
$a,b>0$,
\begin{equation}\label{eq:gamma-beta-definitions}
 \Gamma(a):=\int_0^\infty t^{a-1}e^{-t}\dd t,
 \qquad
 \mathrm B(a,b):=\int_0^1t^{a-1}(1-t)^{b-1}\dd t
 =\frac{\Gamma(a)\Gamma(b)}{\Gamma(a+b)}.
\end{equation}
We shall also use the equivalent formula
\begin{equation}\label{eq:beta-infinite-interval}
 \mathrm B(a,b)=\int_0^\infty
 \frac{t^{a-1}}{(1+t)^{a+b}}\dd t.
\end{equation}
We recall also the recurrence
$\Gamma(a+1)=a\Gamma(a)$ and the sphere-area formula
\begin{equation}\label{eq:sphere-area}
 \abs{\Sph^k}=\frac{2\pi^{(k+1)/2}}{\Gamma((k+1)/2)}.
\end{equation}
In what follows $\dd\omega_{\Sph^k}$ denotes the Riemannian surface measure such that
$\int_{\Sph^k}1\dd\omega_{\Sph^k}=\abs{\Sph^k}$.

Let $d\geq2$, set $N=2^{\lfloor d/2\rfloor}$, and let
$\gamma_1,\ldots,\gamma_d$ be Hermitian matrices on $\C^N$ satisfying the
Clifford relations
\begin{equation}\label{eq:Clifford-relations}
 \gamma_j\gamma_k+\gamma_k\gamma_j=2\delta_{jk}I.
\end{equation}
Set
\begin{equation*}
 p_d:=\frac{2d}{d+1},
 \qquad q_d:=\frac{2d}{d-1}=p_d^*,
 \qquad D_d:=\boldsymbol{\gamma}\cdot(-i\nabla).
\end{equation*}
Inequality \cite[formula (9)]{FrankLossSharp} rewrites as
\begin{equation}\label{eq:FL-sharp-spinor-general}
 \norm{D_d\psi}{L^{p_d}}^{p_d}
 \geq S_d^s\norm{\psi}{L^{q_d}}^{p_d}
\end{equation}
with $S_d^s$ being a sharp constant characterized by
\begin{equation*}
 S_d^s=\inf_{\substack{\psi\in L^{q_d}(\R^d;\C^N)\setminus\{0\}\\
                         D_d\psi\in L^{p_d}(\R^d;\C^N)}}
 \mathcal Q_d(\psi),
\end{equation*}
where
\begin{equation*}
 \mathcal Q_d(\psi):=
 \frac{\norm{D_d\psi}{L^{p_d}}^{p_d}}
      {\norm{\psi}{L^{q_d}}^{p_d}}.
\end{equation*}
This domain is the closure of $C_c^\infty(\R^d;\C^N)$ in the norm
$\norm{\psi}{L^{q_d}}+\norm{D_d\psi}{L^{p_d}}$.
Indeed, $1/p_d=1/q_d+1/d$ and a standard cutoff $\chi_R$ give
\[
 \norm{D_d(\chi_R\psi)-D_d\psi}{L^{p_d}}
 \leq\norm{(1-\chi_R)D_d\psi}{L^{p_d}}
      +C\norm{\psi}{L^{q_d}(B_{2R}\setminus B_R)}\longrightarrow0,
\]
since $\norm{\nabla\chi_R}{L^d}$ is bounded uniformly in $R$.
Also $\chi_R\psi\to\psi$ in $L^{q_d}$, and mollification completes
the approximation.
For every $d\geq2$, \cite[formula (10)]{FrankLossSharp} gives the family
\begin{equation}\label{eq:FL-family-10-d}
 \Psi_{\phi_0,\phi_1}(x)
 :=\frac{\phi_0+\boldsymbol{\gamma}\cdot x\phi_1}
         {(1+\abs{x}^2)^{d/2}},
\end{equation}
where $\phi_0,\phi_1\in\C^N$ satisfy
\begin{equation}\label{eq:FL-family-conditions-d}
 \abs{\phi_0}=\abs{\phi_1}=:a>0,
 \qquad
 \operatorname{Re}\ip{\phi_0}{\gamma_j\phi_1}=0,
 \quad j=1,\ldots,d.
\end{equation}
For $d=3$, taking $\gamma_j=\sigma_j$, $\phi_0=\eta$, and
$\phi_1=i\eta$ gives precisely the Loss--Yau spinor in
\eqref{eq:LY-spinor}.
These spinors solve the Euler--Lagrange equation associated with
\eqref{eq:FL-sharp-spinor-general}. Immediately after defining \eqref{eq:FL-family-10-d}, Frank and Loss
conjecture that the sharp constant in \eqref{eq:FL-sharp-spinor-general} is attained by \eqref{eq:FL-family-10-d}, \cite[p.~311]{FrankLossSharp}. Wang and Zhang subsequently disproved
this conjecture. Their direct comparison in
\cite[Proposition~5.5 and Appendix~C]{WangZhangSpinor} gives a strict
competitor whose quotient lies below the common value of the conformal
candidate family.
Conformal invariance transfers the result to $\R^d$.
The radial profile used below is the classical scalar Aubin--Talenti
profile at $p=p_d$ \cite{Aubin,Talenti}, multiplied by a constant spinor.
The following theorem gives a different elementary
counterexample, with an explicit radial profile and an exact quotient, in
every dimension in which the conjecture was stated.

\begin{Th}\label{thm:FL-spinor-all-d}
For every integer $d\geq2$, define
\begin{align}
 I_d^{\mathrm{AT}}
 &:=\frac{\abs{\Sph^{d-1}}}{q_d}
 \mathrm B\left(\frac{d-1}{2},\frac{d+1}{2}\right),
 \notag\\
 I_d^{\mathrm{FL}}
 &:=\frac{\abs{\Sph^{d-1}}}{2}
 \mathrm B\left(\frac d2,\frac d2\right)
 =\frac{\pi^{d/2}\Gamma(d/2)}{\Gamma(d)}
 =\frac{\abs{\Sph^d}}{2^d}.
 \label{eq:I-FL-d}
\end{align}
Then the constant-direction spinor
\begin{equation}\label{eq:AT-spinor-d}
 \psi_d(x):=u_d(x)\eta,
 \qquad
 u_d(x):=(1+\abs{x}^{q_d})^{-(d-1)/2},
 \qquad \abs{\eta}=1,
\end{equation}
satisfies
\begin{equation}\label{eq:AT-quotient-d}
 \mathcal Q_d(\psi_d)
 =d^{p_d}(I_d^{\mathrm{AT}})^{2/(d+1)}.
\end{equation}
Every spinor in \eqref{eq:FL-family-10-d}--\eqref{eq:FL-family-conditions-d}
satisfies
\begin{equation}\label{eq:FL-quotient-d}
 \mathcal Q_d(\Psi_{\phi_0,\phi_1})
 =d^{p_d}(I_d^{\mathrm{FL}})^{2/(d+1)}.
\end{equation}
Moreover,
\begin{equation}\label{eq:integral-ratio-d}
 \frac{I_d^{\mathrm{AT}}}{I_d^{\mathrm{FL}}}
 =\frac{(d-1)^2}{2d}
 \left(\frac{\Gamma((d-1)/2)}{\Gamma(d/2)}\right)^2
 <1.
\end{equation}
Consequently,
\begin{equation}\label{eq:FL-disproof-d}
 S_d^s
 \leq d^{p_d}(I_d^{\mathrm{AT}})^{2/(d+1)}
 <d^{p_d}(I_d^{\mathrm{FL}})^{2/(d+1)},
\end{equation}
and no nonzero spinor in the family \cite[formula (10)]{FrankLossSharp} is an
optimizer of \eqref{eq:FL-sharp-spinor-general}.
\end{Th}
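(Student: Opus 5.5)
The plan is to compute both quotients explicitly from radial integrals, reduce each to a Beta function through \eqref{eq:beta-infinite-interval}, and compare the two Beta values by log-convexity of $\Gamma$.

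\emph{Step 1: the constant-direction spinor.} I would start exactly as in the three-dimensional lemma. The Clifford relations \eqref{eq:Clifford-relations} give $(\boldsymbol\gamma\cdot a)^2=\abs{a}^2I$ for $a\in\R^d$, and the $\gamma_j$ are Hermitian, so $\abs{D_d(u\eta)}=\abs{\nabla u}$ almost everywhere. Since $(d-1)q_d/2=d$, one gets
\[
 \abs{\nabla u_d}=d\,r^{q_d-1}(1+r^{q_d})^{-(d+1)/2}.
\]
The exponent identities $p_d(d+1)/2=d$ and $p_d(q_d-1)=q_d$ then give
\[
 \abs{\nabla u_d}^{p_d}=d^{p_d}r^{q_d}(1+r^{q_d})^{-d},
 \qquad
 \abs{u_d}^{q_d}=(1+r^{q_d})^{-d}.
\]
In polar coordinates, the substitution $t=r^{q_d}$ together with \eqref{eq:beta-infinite-interval} (where $a+b=d$) turns both integrals into Beta functions:
\[
 \norm{\nabla u_d}{p_d}^{p_d}
 =d^{p_d}\frac{\abs{\Sph^{d-1}}}{q_d}\mathrm B\Bigl(\tfrac{d+1}2,\tfrac{d-1}2\Bigr)
 =d^{p_d}I_d^{\mathrm{AT}},
 \qquad
 \norm{u_d}{q_d}^{q_d}=I_d^{\mathrm{AT}}.
\]
Because $p_d/q_d=(d-1)/(d+1)$, this yields \eqref{eq:AT-quotient-d}.

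\emph{Step 2: the Frank--Loss family.} Differentiating directly and using $\sum_j\gamma_j^2=d$ and $(\boldsymbol\gamma\cdot x)^2=\abs{x}^2$, I expect the cancellation
\[
 D_d\Psi_{\phi_0,\phi_1}
 =-i\,d\,(1+\abs{x}^2)^{-d/2-1}\bigl(\phi_1-\boldsymbol\gamma\cdot x\,\phi_0\bigr).
\]
The cross terms $2\operatorname{Re}\ip{\phi_0}{\boldsymbol\gamma\cdot x\,\phi_1}$ vanish by \eqref{eq:FL-family-conditions-d}; so does its conjugate version, since the $\gamma_j$ are Hermitian. Hence
\[
 \abs{\Psi}=a(1+r^2)^{-(d-1)/2},
 \qquad
 \abs{D_d\Psi}=da(1+r^2)^{-(d+1)/2}.
\]
Both $\abs{D_d\Psi}^{p_d}$ and $\abs{\Psi}^{q_d}$ are then multiples of $(1+r^2)^{-d}$. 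Its integral equals $\tfrac12\abs{\Sph^{d-1}}\mathrm B(d/2,d/2)=I_d^{\mathrm{FL}}$, by the substitution $t=r^2$. The powers of $a$ cancel because the quotient is homogeneous of degree zero, which gives \eqref{eq:FL-quotient-d}. The alternative closed forms in \eqref{eq:I-FL-d} follow from \eqref{eq:gamma-beta-definitions}, \eqref{eq:sphere-area} and the Legendre duplication formula.

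\emph{Step 3: the comparison, which is the main point.} Using $2/q_d=(d-1)/d$ and writing $\Gamma((d+1)/2)=\frac{d-1}2\Gamma(\frac{d-1}2)$, the ratio $I_d^{\mathrm{AT}}/I_d^{\mathrm{FL}}$ reduces to the expression in \eqref{eq:integral-ratio-d}. Set $x=(d-1)/2>0$. The claim $<1$ is then equivalent to
\[
 \Gamma(x+\tfrac12)>\frac{x\,\Gamma(x)}{\sqrt{x+1/2}}.
\]
I would prove this by strict log-convexity of $\Gamma$ at the midpoint $x+1$ of $x+\tfrac12$ and $x+\tfrac32$:
\[
 \Gamma(x+1)^2<\Gamma(x+\tfrac12)\Gamma(x+\tfrac32)=(x+\tfrac12)\Gamma(x+\tfrac12)^2,
\]
and then substitute $\Gamma(x+1)=x\Gamma(x)$. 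One must choose the correct triple here: the more obvious pair $x,x+1$ gives the bound in the wrong direction.

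Finally, $\psi_d$ is an admissible competitor, so $S_d^s\le\mathcal Q_d(\psi_d)$. The map $t\mapsto t^{2/(d+1)}$ is strictly monotone, so \eqref{eq:FL-disproof-d} follows. Hence the common value of the family \eqref{eq:FL-family-10-d} strictly exceeds $S_d^s$, and no member of that family is an optimizer.
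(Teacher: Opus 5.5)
Your proposal is correct and follows the paper's proof essentially step for step. It uses the same reduction $\abs{D_d(u\eta)}=\abs{\nabla u}$, the same Beta-function evaluations via $t=r^{q_d}$ and $t=r^2$, the same formula for $D_d\Psi_{\phi_0,\phi_1}$, and the same strict log-convexity of $\Gamma$ at the midpoint $x+1$ of $x+\tfrac12$ and $x+\tfrac32$ (you correctly note that the pair $x,x+1$ would give the wrong direction).
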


\begin{proof}
For a real-valued weakly differentiable function $v$ and a constant unit
spinor $\eta$, the Clifford relations give
\[
 \abs{D_d(v\eta)}^2
 =\ip{\eta}{(\boldsymbol{\gamma}\cdot\nabla v)^2\eta}
 =\abs{\nabla v}^2.
\]
For the function in \eqref{eq:AT-spinor-d},
\[
 u_d^{q_d}=(1+r^{q_d})^{-d},
 \qquad
 \abs{u_d'(r)}
 =d r^{q_d-1}(1+r^{q_d})^{-(d+1)/2}.
\]
For every radial integrand $F$, polar coordinates mean
\begin{equation}\label{eq:radial-integration-d}
 \int_{\R^d}F(\abs{x})\dd x
 =\abs{\Sph^{d-1}}\int_0^\infty F(r)r^{d-1}\dd r.
\end{equation}
In the first integral below, set $t=r^{q_d}$. Since
$r^{d-1}\dd r=q_d^{-1}t^{d/q_d-1}\dd t$ and
$d/q_d=(d-1)/2$, \eqref{eq:beta-infinite-interval} gives
\begin{align*}
 \int_{\R^d}u_d^{q_d}\dd x
 &=\frac{\abs{\Sph^{d-1}}}{q_d}
   \int_0^\infty
   \frac{t^{(d-1)/2-1}}{(1+t)^d}\dd t=\frac{\abs{\Sph^{d-1}}}{q_d}
   \mathrm B\left(\frac{d-1}{2},\frac{d+1}{2}\right)
   =I_d^{\mathrm{AT}},\\
 \int_{\R^d}\abs{\nabla u_d}^{p_d}\dd x
 &=\frac{d^{p_d}\abs{\Sph^{d-1}}}{q_d}
   \int_0^\infty
   \frac{t^{(d+1)/2-1}}{(1+t)^d}\dd t=\frac{d^{p_d}\abs{\Sph^{d-1}}}{q_d}
   \mathrm B\left(\frac{d+1}{2},\frac{d-1}{2}\right)
   =d^{p_d}I_d^{\mathrm{AT}}.
\end{align*}
For the second computation we used
$p_d(q_d-1)=q_d$ and $p_d(d+1)/2=d$. 
Since $1-p_d/q_d=2/(d+1)$, this proves
\eqref{eq:AT-quotient-d}.

For comparison, we compute the known quotient of the Frank--Loss family
in our normalization; see \cite[Appendix~C]{WangZhangSpinor}.
The identities \eqref{eq:Clifford-relations} and
\eqref{eq:FL-family-conditions-d} imply
\begin{align*}
 \abs{\phi_0+\boldsymbol{\gamma}\cdot x\phi_1}^2
 &=a^2(1+\abs{x}^2),\\
 \abs{\boldsymbol{\gamma}\cdot x\phi_0-\phi_1}^2
 &=a^2(1+\abs{x}^2).
\end{align*}
Consequently,
\begin{equation*}
 \abs{\Psi_{\phi_0,\phi_1}(x)}
 =a(1+\abs{x}^2)^{-(d-1)/2}.
\end{equation*}
A direct differentiation gives
\begin{equation*}
 D_d\Psi_{\phi_0,\phi_1}
 =i d(1+\abs{x}^2)^{-(d+2)/2}
   (\boldsymbol{\gamma}\cdot x\phi_0-\phi_1),
\end{equation*}
and hence
\begin{equation*}
 \abs{D_d\Psi_{\phi_0,\phi_1}(x)}
 =da(1+\abs{x}^2)^{-(d+1)/2}.
\end{equation*}
Using \eqref{eq:radial-integration-d} and the substitution $t=|x|^2$, we have
\begin{align*}
 \int_{\R^d}(1+\abs{x}^2)^{-d}\dd x
 =\frac{\abs{\Sph^{d-1}}}{2}
   \int_0^\infty\frac{t^{d/2-1}}{(1+t)^d}\dd t
 =\frac{\abs{\Sph^{d-1}}}{2}\mathrm B\left(\frac d2,\frac d2\right)
 =I_d^{\mathrm{FL}}.
\end{align*}
The first closed form in \eqref{eq:I-FL-d} follows from
\eqref{eq:gamma-beta-definitions} and \eqref{eq:sphere-area}. The second one
follows from Legendre's duplication formula
\begin{equation}\label{eq:gamma-duplication}
 \Gamma(z)\Gamma(z+\tfrac12)
 =2^{1-2z}\sqrt\pi\,\Gamma(2z).
\end{equation}
Indeed, applying it with $z=d/2$ gives
$\pi^{d/2}\Gamma(d/2)/\Gamma(d)=\abs{\Sph^d}/2^d$.
Consequently,
\[
 \int_{\R^d}\abs{\Psi_{\phi_0,\phi_1}}^{q_d}\dd x
 =a^{q_d}I_d^{\mathrm{FL}},
 \qquad
 \int_{\R^d}\abs{D_d\Psi_{\phi_0,\phi_1}}^{p_d}\dd x
 =d^{p_d}a^{p_d}I_d^{\mathrm{FL}}.
\]
This proves \eqref{eq:FL-quotient-d}.

For completeness, the equality in \eqref{eq:integral-ratio-d} follows by
writing both beta functions in gamma form:
\begin{align*}
 \frac{I_d^{\mathrm{AT}}}{I_d^{\mathrm{FL}}}
 =\frac{2}{q_d}
   \frac{\Gamma((d-1)/2)\Gamma((d+1)/2)}{\Gamma(d/2)^2}=\frac{(d-1)^2}{2d}
   \left(\frac{\Gamma((d-1)/2)}{\Gamma(d/2)}\right)^2.
\end{align*}
Here we used $2/q_d=(d-1)/d$ and
$\Gamma((d+1)/2)=((d-1)/2)\Gamma((d-1)/2)$.
To prove strictness, put
$x=(d-1)/2>0$. Strict log-convexity of the gamma function, applied at the
midpoint $x+1$ of $x+1/2$ and $x+3/2$, gives
\[
 \Gamma(x+1)^2
 <\Gamma(x+1/2)\Gamma(x+3/2)
 =(x+1/2)\Gamma(x+1/2)^2.
\]
Since $\Gamma(x+1)=x\Gamma(x)$, it follows that
\[
 \frac{\Gamma(x+1/2)}{\Gamma(x)}
 >\frac{x}{\sqrt{x+1/2}}.
\]
Indeed,
\[
 \frac{I_d^{\mathrm{AT}}}{I_d^{\mathrm{FL}}}
 =\frac{x^2}{x+1/2}
  \left(\frac{\Gamma(x)}{\Gamma(x+1/2)}\right)^2<1,
\]
which is the strict inequality in \eqref{eq:integral-ratio-d}. Both trial
families belong to the natural
spaces in \eqref{eq:FL-sharp-spinor-general}.
The conclusion \eqref{eq:FL-disproof-d} follows.
\end{proof}

\begin{Cor}
For $d=3$, one has
\[
 I_3^{\mathrm{AT}}=\frac{2\pi}{3},
 \qquad I_3^{\mathrm{FL}}=\frac{\pi^2}{4},
\]
and therefore
\begin{equation*}
 S_3^s
 \leq\sqrt{18\pi}
 <\frac{3\sqrt3}{2}\pi.
\end{equation*}

\end{Cor}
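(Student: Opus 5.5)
This is a direct specialization of Theorem~\ref{thm:FL-spinor-all-d} to $d=3$, so the plan is to substitute $d=3$ into the closed forms and then compare numbers. With $d=3$ we have $p_3=3/2$ and $q_3=3$, and $\abs{\Sph^2}=4\pi$. For the first integral,
\[
 I_3^{\mathrm{AT}}=\frac{4\pi}{3}\mathrm B(1,2)=\frac{4\pi}{3}\cdot\frac12=\frac{2\pi}{3}.
\]
This agrees with the value $\norm{u}{3}^3=2\pi/3$ computed in the previous section for $u=(1+\abs{x}^3)^{-1}$, which is exactly $u_3$. For the second, the closed form in \eqref{eq:I-FL-d} gives
\[
 I_3^{\mathrm{FL}}=\abs{\Sph^3}/8=2\pi^2/8=\pi^2/4.
\]

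Next I insert these values into \eqref{eq:AT-quotient-d}, using the exponents $d^{p_d}=3^{3/2}$ and $2/(d+1)=1/2$. This gives
\[
 \mathcal Q_3(\psi_3)=3\sqrt3\,(2\pi/3)^{1/2}=\sqrt{27\cdot 2\pi/3}=\sqrt{18\pi}.
\]
Similarly, \eqref{eq:FL-quotient-d} gives $3\sqrt3\cdot\pi/2$ for every member of the Frank--Loss family. The bound $S_3^s\le\sqrt{18\pi}$ then follows from \eqref{eq:FL-disproof-d}.

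The strict inequality can be read off from \eqref{eq:integral-ratio-d}, but I would also check it directly. Squaring both sides, it is equivalent to $18\pi<\frac{27}{4}\pi^2$, that is, $\pi>8/3$, which is true. As a cross-check, the ratio formula gives
\[
 \frac{I_3^{\mathrm{AT}}}{I_3^{\mathrm{FL}}}=\frac{4}{6}\cdot\frac{\Gamma(1)^2}{\Gamma(3/2)^2}=\frac{8}{3\pi},
\]
which matches $(2\pi/3)/(\pi^2/4)$.

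There is no real obstacle here. The only point needing care is the exponent bookkeeping $d^{p_d}(I)^{2/(d+1)}$, so that the result stays consistent with the $C_s$ normalization in \eqref{eq:main-Cs}. Indeed $S_3^s=C_s^{3/2}$, and $(18\pi)^{1/2}=((18\pi)^{1/3})^{3/2}$, so the two statements agree.
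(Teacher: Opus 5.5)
Your proposal is correct and follows the paper's route: the corollary is a direct substitution of $d=3$ (so $p_3=3/2$, $q_3=3$) into the closed forms for $I_d^{\mathrm{AT}}$, $I_d^{\mathrm{FL}}$ and the quotient formulas of Theorem~\ref{thm:FL-spinor-all-d}, and the strict inequality reduces to $\pi>8/3$. Your cross-checks against the ratio formula and against the $C_s$ normalization of Section~3 are also consistent.
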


\section{The Frank--Loss vector conjecture in arbitrary dimension}
\label{sec:FL-vector-all-d}

Let $d\geq3$. Following Frank and Loss, we write $\nabla\wedge A$ for
the antisymmetric derivative of a vector field $A=(A_1,\ldots,A_d)$,
with independent coefficients and pointwise norm
\begin{equation}\label{eq:antisym-gradient-convention}
\begin{aligned}
 (\nabla\wedge A)_{ij}
 &:=\partial_iA_j-\partial_jA_i,\qquad 1\leq i<j\leq d,\\
 \abs{\nabla\wedge A}^2
 &:=\sum_{1\leq i<j\leq d}
   \abs{\partial_iA_j-\partial_jA_i}^2.
\end{aligned}
\end{equation}
Equivalently, these are the coefficients of the two-form
$\exd(A^\flat)=\sum_{i<j}(\partial_iA_j-\partial_jA_i)
\dd x_i\wedge\dd x_j$, where $A^\flat:=\sum_iA_i\dd x_i$.
In dimension three we identify this two-form with the usual curl vector
by the Hodge star; the identification preserves its norm. For $d>3$,
$\nabla\wedge A$ has $\binom d2$ independent components and is not a
vector in $\R^d$. This normalization agrees with
\cite[Theorem~1.3]{FrankLossZero} and \cite[formula~(8)]{FrankLossSharp}.
We set
\begin{align*}
 Y_d&:=\{A\in L^d(\R^d;\R^d):
       \nabla\wedge A\in L^{d/2}(\R^d;\R^{d(d-1)/2})\},\\
 \gnorm{A}{d}&:=
 \inf_{\phi\in\dot W^{1,d}(\R^d)}\norm{A-\nabla\phi}{L^d},\\
 R_d^v(A)&:=\frac{\norm{\nabla\wedge A}{L^{d/2}}}{\gnorm{A}{d}},
 \qquad
 \mathcal Q_d^v(A):=(R_d^v(A))^{d/2}.
\end{align*}
Since mixed weak derivatives commute,
$\nabla\wedge (A-\nabla\phi)=\nabla\wedge A$ in distributions, so the
numerator is gauge invariant. The subspace of $L^d$ gradients is closed
by the distributional Poincar\'e lemma, as in dimension three. Hence
$\gnorm{A}{d}>0$ whenever $\nabla\wedge A\neq0$.
Thus equation~(8) of \cite{FrankLossSharp}
can be written purely in Cartesian terms as
\begin{equation*}
 S_d^v
 =\inf_{\substack{A\in Y_d\\ \nabla\wedge A\neq0}}\mathcal Q_d^v(A).
\end{equation*}
The same infimum is obtained over $C_c^\infty(\R^d;\R^d)$ fields with
nonzero antisymmetric derivative. Indeed, let $A\in Y_d$ and choose a
standard cutoff $\chi_R$, equal to one on $B_R$ and supported in $B_{2R}$,
with $\norm{\nabla\chi_R}{L^d}$ bounded independently of $R$. Then
$\chi_RA\to A$ in $L^d$, and
\[
 \norm{\nabla\chi_R\wedge A}{L^{d/2}}
 \leq\norm{\nabla\chi_R}{L^d}
      \norm{A}{L^d(B_{2R}\setminus B_R)}\longrightarrow0.
\]
The identity
$\nabla\wedge(\chi_RA)=\chi_R\nabla\wedge A+\nabla\chi_R\wedge A$
therefore gives convergence of the antisymmetric derivatives in $L^{d/2}$.
Mollification completes the approximation in the graph norm
$\norm{A}{L^d}+\norm{\nabla\wedge A}{L^{d/2}}$.
Since the gauge seminorm is $1$-Lipschitz in $L^d$, the denominators
also converge, and so do the quotients whenever
$\nabla\wedge A\neq0$.

We next recall the conformal symmetry of the quotient. For $d\geq3$,
a M\"obius transformation of $\R^d\cup\{\infty\}$ is a composition of
translations, orthogonal transformations, positive dilations, and the
inversion $x\mapsto x/\abs{x}^2$, which interchanges $0$ and $\infty$.
Such maps preserve angles wherever they are finite. The pullback below
is the change-of-coordinates rule for a potential viewed as a one-form;
this action is described in \cite[Remark~3]{FrankLossSharp}.
Frank and Loss use the invariance of the gauge quotient under
stereographic projection in \cite[Section~3]{FrankLossVector}; see also
\cite[Sections~2.2 and~6]{WangZhangConstants} for the corresponding
conformal invariance of form quotients. We include the argument on $Y_d$,
including a possible pole of the transformation. It will extend the
strict comparison with one Loss--Yau--Dunne--Min potential to its entire
conformal family.

\begin{Lem}\label{lem:vector-conformal-invariance}
Let $d\geq3$ and let $F$ be a M\"obius transformation of
$\R^d\cup\{\infty\}$. Identifying vector fields with one-forms, define
\[
 (F^*A)(x):=DF(x)^T A(F(x))
\]
away from the possible pole of $F$. Then $F^*$ maps $Y_d$ bijectively
onto itself and preserves both $\norm{\nabla\wedge A}{L^{d/2}}$ and
$\gnorm{A}{d}$. In particular, it preserves $R_d^v$ and $\mathcal Q_d^v$
whenever $\nabla\wedge A\neq0$.
\end{Lem}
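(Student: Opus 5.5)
The plan is to reduce to the generators. A Möbius map is a composition of translations, orthogonal maps, dilations and the inversion $\iota(x)=x/\abs{x}^2$. Pullback is contravariantly functorial, $(F\circ G)^*=G^*F^*$, so it suffices to check each generator. For each one I will show three things: that $F^*$ maps $Y_d$ into $Y_d$, that it preserves both norms, and that it is invertible with inverse $(F^{-1})^*$. Every generator is conformal, $DF(x)=\lambda(x)O(x)$ with $O(x)\in\mathrm{O}(d)$ and Jacobian $\lambda^d$.

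\textbf{Step 1: pointwise norms.} Since $DF^T=\lambda O^T$, we have $\abs{(F^*A)(x)}=\lambda(x)\abs{A(F(x))}$. Because pullback of forms commutes with $\exd$,
\[
 \exd(F^*A^\flat)=F^*(\exd A^\flat),
\]
and a two-form pulls back by $\Lambda^2 DF^T$, which is $\lambda^2$ times an isometry of $\Lambda^2\R^d$. Hence
\[
 \abs{\nabla\wedge F^*A}(x)=\lambda(x)^2\abs{\nabla\wedge A}(F(x)).
\]
The change of variables $y=F(x)$, $\dd y=\lambda^d\dd x$, then gives
\[
 \norm{F^*A}{L^d}=\norm{A}{L^d},\qquad \norm{\nabla\wedge F^*A}{L^{d/2}}=\norm{\nabla\wedge A}{L^{d/2}}.
\]
These are exactly the critical exponents $\lambda^d\cdot\lambda^{-d}$. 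The pole of $\iota$ and the point at infinity are null sets, so they do not affect the integrals.

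\textbf{Step 2: distributional validity of $\exd F^*=F^*\exd$.} For smooth maps this is classical on $\R^d\setminus\{0\}$: approximate $A$ in the graph norm by $C_c^\infty$ fields, as in the density paragraph above. By Step 1 the pullbacks converge in the corresponding norms on $\R^d\setminus\{0\}$.

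The main obstacle is the inversion, where one must show that no singular part of $\nabla\wedge F^*A$ is supported at the origin. I will use a removable-singularity argument. Let $B:=F^*A\in L^d$ and $G:=F^*(\nabla\wedge A)\in L^{d/2}$, with $\nabla\wedge B=G$ away from $0$. Test against $\varphi(1-\chi_\varepsilon)$, where $\chi_\varepsilon$ is a cutoff at scale $\varepsilon$ with $\norm{\nabla\chi_\varepsilon}{L^d}$ bounded. The error term is bounded by
\[
 \norm{\varphi}{\infty}\norm{\nabla\chi_\varepsilon}{L^d}\norm{B}{L^d(B_{2\varepsilon})}\to0
\]
by absolute continuity of the integral. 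This is the same capacity-zero computation as in the density argument, so $\nabla\wedge B=G$ on all of $\R^d$.

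\textbf{Step 3: gauge seminorm.} For $\phi\in\dot W^{1,d}$, the chain rule gives $F^*\nabla\phi=\nabla(\phi\circ F)$. By Step 1 applied to gradients, $\phi\circ F$ lies in $\dot W^{1,d}$. At the pole this again follows from the removable-point argument, since a point has zero $d$-capacity for $d\geq2$ (one could also use density of smooth functions). So $F^*$ maps the closed subspace of $L^d$ gradients bijectively onto itself. Since $F^*$ is a linear isometry of $L^d$, this gives
\[
 \norm{F^*A-\nabla(\phi\circ F)}{L^d}=\norm{A-\nabla\phi}{L^d}.
\]
Taking infima, $\gnorm{F^*A}{d}=\gnorm{A}{d}$.

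\textbf{Conclusion.} Bijectivity follows because $F^{-1}$ is Möbius and $(F^{-1})^*F^*=\mathrm{id}$ almost everywhere. Moreover $\nabla\wedge F^*A\neq0$ if and only if $\nabla\wedge A\neq0$, so $R_d^v$ and $\mathcal Q_d^v$ are preserved.
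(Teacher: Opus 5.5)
Your proposal is correct and follows essentially the same route as the paper: conformality $DF=\lambda O$ gives the $\lambda^k$ scaling of $k$-forms and invariance of both norms by change of variables, $\exd F^*=F^*\exd$ is extended across the pole by a cutoff argument, and $F^*$ is an $L^d$ isometry mapping the closed gradient subspace onto itself, hence preserving the distance $\gnorm{\cdot}{d}$ to it. Your reduction to generators and your use of the chain rule in place of the Poincar\'e lemma are inessential variations; note only that in the cutoff estimate two $L^d$ factors do not pair to $L^1$, so the correct H\"older bound uses $\norm{\nabla\chi_\varepsilon}{L^{d/(d-1)}}\leq C\varepsilon^{d-2}$ as in the paper, although your displayed bound still holds for small $\varepsilon$ because the missing factor $\abs{B_{2\varepsilon}}^{(d-2)/d}$ is at most one.
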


\begin{proof}
Write $DF=\lambda O$, with $O$ orthogonal and $\lambda>0$.
Pullback multiplies the pointwise norm of a $k$-form by $\lambda^k$,
whereas $\abs{\det DF}=\lambda^d$. Change of variables therefore
preserves the $L^d$ norm of one-forms and the $L^{d/2}$ norm of two-forms.
Exterior differentiation commutes with pullback away from the pole.
The identity extends distributionally across the pole: with balls
centred there and a cutoff supported in $B_{2\varepsilon}$, equal to
one on $B_\varepsilon$, and satisfying
$\abs{\nabla\chi_\varepsilon}\leq C/\varepsilon$, the additional term
against any fixed smooth test form is bounded by
\[
 C\norm{F^*A}{L^d(B_{2\varepsilon})}
   \norm{\nabla\chi_\varepsilon}{L^{d/(d-1)}}
 \leq C\varepsilon^{d-2}
   \norm{F^*A}{L^d(B_{2\varepsilon})}\longrightarrow0.
\]
Thus a pulled-back $L^d$ gradient remains distributionally closed on
$\R^d$. The distributional Poincar\'e lemma makes it a gradient of a
function in $\dot W^{1,d}(\R^d)$. Applying the same argument to $F^{-1}$
shows that pullback maps the gradient subspace onto itself.
Its $L^d$ isometry consequently preserves distance to that subspace,
which is precisely $\gnorm{A}{d}$.
\end{proof}

When $d=2m+1$ is odd, let $\Sigma$ be the skew-symmetric endomorphism
defined by
\[
 \Sigma e_1=0,\qquad
 \Sigma e_{2k}=e_{2k+1},\qquad
 \Sigma e_{2k+1}=-e_{2k},\qquad k=1,\ldots,m.
\]
The higher-dimensional Loss--Yau potential constructed by Dunne and Min
\cite{DunneMin}, in the formulation of \cite[Appendix~A]{FrankLossZero}, is
\begin{equation}\label{eq:LYDM-potential-d}
 A_{\mathrm{LYDM},d}(x)
 :=\frac{d}{(1+\abs{x}^2)^2}
 \bigl((1-\abs{x}^2)e_1+2x_1x+2\Sigma x\bigr).
\end{equation}
We verify below the known norm identities in
\cite[Lemma~6.1]{WangZhangConstants} in Cartesian coordinates, keeping
track of the two-form normalization in \eqref{eq:antisym-gradient-convention}.

Put
\[
 s:=1+\abs{x}^2,
 \qquad e:=e_1,
 \qquad
 B(x):=(1-\abs{x}^2)e+2x_1x+2\Sigma x,
\]
so that $A_{\mathrm{LYDM},d}=d s^{-2}B$. Since
\[
 \Sigma^T=-\Sigma,\qquad \Sigma e=0,
 \qquad \Sigma^2=-(I-e\otimes e),
\]
one has
\begin{equation}\label{eq:LYDM-basic-identities}
 \abs{B(x)}^2=s^2,
 \qquad x\cdot B(x)=x_1s,
 \qquad \nabla\cdot B(x)=2d x_1.
\end{equation}
The first identity follows by writing $x=x_1e+w$, $w\perp e$, and using
$\abs{\Sigma w}=\abs w$, $w\cdot\Sigma w=0$. Consequently,
\[
 \abs{A_{\mathrm{LYDM},d}}=\frac d s
\]
and, by \eqref{eq:LYDM-basic-identities},
\begin{equation*}
 \nabla\cdot\bigl(\abs{A_{\mathrm{LYDM},d}}^{d-2}
 A_{\mathrm{LYDM},d}\bigr)
 =d^{d-1}\nabla\cdot(s^{-d}B)=0.
\end{equation*}
Hence Lemma~\ref{lem:gauge} yields
$\gnorm{A_{\mathrm{LYDM},d}}{d}=\norm{A_{\mathrm{LYDM},d}}{L^d}$.

For completeness, the numerator can also be computed directly. Let
$J_A=(\partial_jA_i)_{i,j=1}^d$ be the Jacobian matrix and let
$K_A:=J_A-J_A^T$. By \eqref{eq:antisym-gradient-convention},
\begin{equation}\label{eq:K-C-relation}
 \abs{\nabla\wedge A}^2=\frac12\abs{K_A}_{\mathrm F}^2,
\end{equation}
where $\abs{\cdot}_{\mathrm F}$ is the Frobenius norm. Define
\[
 M:=xe^T-ex^T+\Sigma,
 \qquad
 Q:=M-\frac1s(Bx^T-xB^T).
\]
A direct differentiation of $A_{\mathrm{LYDM},d}=ds^{-2}B$ gives
\[
 K_{A_{\mathrm{LYDM},d}}=4d s^{-2}Q.
\]
To compute $Q^TQ$, write $x=te+hu$, where $h>0$, $u\perp e$,
and $\abs u=1$, and put $v=\Sigma u$.
The space $E=\operatorname{span}\{e,u,v\}$ and its orthogonal complement
are invariant under $Q$. In the orthonormal basis $(e,u,v)$ of $E$,
\[
 B=(1+t^2-h^2,\,2th,\,2h),\qquad
 Q|_E=\frac1s
 \begin{pmatrix}
 0&-2h&2th\\
 2h&0&-(1+t^2-h^2)\\
 -2th&1+t^2-h^2&0
 \end{pmatrix}.
\]
The displayed matrix satisfies
$(Q|_E)^TQ|_E=I_E-BB^T/s^2$, since $\abs B=s$.
On $E^\perp$, $Q=\Sigma$ and $\Sigma^T\Sigma=I$.
Combining the two restrictions, and treating $h=0$ by continuity, gives
\begin{equation}\label{eq:Q-projection}
 Q^TQ=I-\frac{BB^T}{s^2}.
\end{equation}
Taking traces in \eqref{eq:Q-projection} and using
\eqref{eq:K-C-relation}, we obtain the pointwise identity
\begin{equation*}
 \abs{\nabla\wedge A_{\mathrm{LYDM},d}}^2
 =8d^2(d-1)s^{-4}.
\end{equation*}
Finally, radial integration and the Euler beta integral give
\begin{equation*}
 \int_{\R^d}(1+\abs{x}^2)^{-d}\dd x
 =\frac{\abs{\Sph^{d-1}}}{2}
   \mathrm B\left(\frac d2,\frac d2\right)
 =2^{-d}\abs{\Sph^d}.
\end{equation*}
Therefore
\begin{align*}
 \int_{\R^d}\abs{A_{\mathrm{LYDM},d}}^d\dd x
 &=d^d2^{-d}\abs{\Sph^d},\\
 \int_{\R^d}\abs{\nabla\wedge A_{\mathrm{LYDM},d}}^{d/2}\dd x
 &=[2d\sqrt{2(d-1)}]^{d/2}2^{-d}\abs{\Sph^d}.
\end{align*}
It follows that
\begin{equation}\label{eq:LYDM-sphere}
 R_d^v(A_{\mathrm{LYDM},d})
 =\sqrt{2(d-1)}\,\abs{\Sph^d}^{1/d},
\end{equation}
and equivalently
\begin{equation*}
 \mathcal Q_d^v(A_{\mathrm{LYDM},d})
 =[2(d-1)]^{d/4}\abs{\Sph^d}^{1/2}.
\end{equation*}

Frank and Loss observed that this field solves the Euler--Lagrange equation
and wrote after equation~(8) that it is conceivable that it is an optimizer
\cite[p.~311]{FrankLossSharp}. Wang and Zhang refer to this as the
Frank--Loss conjecture and prove that its conformal family is not even
locally minimizing in $W^{1,d/2}(\Sph^d;T^*\Sph^d)$, after conformal
identification with one-forms on the round sphere; see
\cite[Theorem~1.3]{WangZhangConstants}. Formula
\eqref{eq:LYDM-potential-d} is available only when $d$ is odd, because the
complex structure $\Sigma$ acts on the even-dimensional space
$e_1^\perp$. 

We now extend the three-dimensional azimuthal competitor to every
dimension $d\geq3$. Put
\begin{equation}\label{eq:vector-trial-d}
 s:=1+\abs{x}^2,
 \qquad \rho^2:=x_1^2+x_2^2,
 \qquad
 A_{\Tclass,d}(x):=\frac{d}{s^2}(-x_2,x_1,0,\ldots,0).
\end{equation}

\begin{Th}\label{thm:FL-vector-all-d}
For every integer $d\geq3$, the field $A_{\Tclass,d}$ belongs to $Y_d$ and
satisfies
\begin{equation}\label{eq:vector-trial-gauge-d}
 \nabla\cdot(\abs{A_{\Tclass,d}}^{d-2}A_{\Tclass,d})=0,
 \qquad
 \gnorm{A_{\Tclass,d}}{d}=\norm{A_{\Tclass,d}}{L^d}.
\end{equation}
Let
\begin{equation}\label{eq:Gd-bd}
 G_d:=\frac{\Gamma(1+d/2)\Gamma((d+1)/2)}{\Gamma(d+1/2)}
     =\frac{2^d}{\binom{2d}{d}},
 \qquad
 b_d:=\frac{2(d-1)}{3d-2}.
\end{equation}
Then
\begin{equation}\label{eq:vector-trial-Ld-d}
 \int_{\R^d}\abs{A_{\Tclass,d}}^d\dd x
 =d^d\,2^{-2d}\abs{\Sph^d}G_d.
\end{equation}
\begin{equation}\label{eq:vector-trial-energy-d}
 \int_{\R^d}\abs{\nabla\wedge A_{\Tclass,d}}^{d/2}\dd x
 =(2d)^{d/2}2^{-d}\abs{\Sph^d}b_d,
\end{equation}
and consequently
\begin{align}
 \mathcal Q_d^v(A_{\Tclass,d})
 &=2^{d/2}\abs{\Sph^d}^{1/2}\frac{b_d}{\sqrt{G_d}},
 \label{eq:vector-trial-powered-d}\\
 R_d^v(A_{\Tclass,d})
 &=2\abs{\Sph^d}^{1/d}
   \left(\frac{4(d-1)^2}{(3d-2)^2G_d}\right)^{1/d}.
 \label{eq:vector-trial-unpowered-d}
\end{align}
Moreover,
\begin{equation}\label{eq:Theta-d}
 \frac{R_d^v(A_{\Tclass,d})}
      {\sqrt{2(d-1)}\abs{\Sph^d}^{1/d}}
 =:\Theta_d
 =\left[
   \frac{4\binom{2d}{d}(d-1)^2}
   {(3d-2)^2[2(d-1)]^{d/2}}
   \right]^{1/d}<1.
\end{equation}
In particular,
\begin{equation*}
 S_d^v
 \leq2^{d/2}\abs{\Sph^d}^{1/2}\frac{b_d}{\sqrt{G_d}},
 \qquad d\geq3,
\end{equation*}
and, for every odd $d\geq3$,
\begin{equation*}
 S_d^v
 \leq\mathcal Q_d^v(A_{\Tclass,d})
 <[2(d-1)]^{d/4}\abs{\Sph^d}^{1/2}
 =\mathcal Q_d^v(A_{\mathrm{LYDM},d}).
\end{equation*}
Hence, in every odd dimension, no nonzero scalar multiple of a conformal
pullback of the Loss--Yau--Dunne--Min potential, with or without an added
$L^d$ gradient, is an optimizer of the Frank--Loss vector inequality.
\end{Th}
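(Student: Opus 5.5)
The plan is to reduce everything to explicit pointwise formulas and then evaluate the two integrals by transplanting them to the round sphere. Write $x=(x',y)$ with $x'=(x_1,x_2)$, $y=(x_3,\ldots,x_d)$, $\rho=\abs{x'}$, and $s=1+\rho^2+\abs y^2$.

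\emph{Gauge condition and membership in $Y_d$.} First, $\abs{A_{\Tclass,d}}=d\rho s^{-2}$. Hence $\abs{A_{\Tclass,d}}^{d-2}A_{\Tclass,d}=g(\rho,\abs y)\,(-x_2,x_1,0,\ldots,0)$ for a smooth function $g$. For any function $g$ depending on $x'$ only through $\rho$, the divergence is $-x_2\partial_1g+x_1\partial_2g=\partial_\theta g=0$. This identity holds classically away from $\rho=0$ and extends distributionally as in the three-dimensional argument of the introduction. Lemma~\ref{lem:gauge} then gives \eqref{eq:vector-trial-gauge-d}. Membership in $Y_d$ follows from $\abs{A_{\Tclass,d}}\lesssim(1+\abs x)^{-3}$ together with the pointwise curl bound below.

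\emph{Pointwise curl.} Differentiating gives
\[
(\nabla\wedge A_{\Tclass,d})_{12}=2ds^{-3}(1-\rho^2+\abs y^2).
\]
For $j\geq3$ one has $\partial_jA_1=4dx_2x_js^{-3}$ and $\partial_jA_2=-4dx_1x_js^{-3}$, while all components with $3\leq i<j$ vanish. Summing squares, I expect the identity
\[
(1+\abs y^2-\rho^2)^2+4\rho^2\abs y^2=s^2-4\rho^2,
\]
which gives
\[
\abs{\nabla\wedge A_{\Tclass,d}}^2=4d^2s^{-6}(s^2-4\rho^2).
\]

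\emph{Integrals on the sphere.} The fractional power $d/4$ makes direct Cartesian integration awkward, so I use stereographic projection. Its Jacobian is $\dd\omega_{\Sph^d}=(2/s)^d\dd x$, and the first two sphere coordinates are $\xi_j=2x_j/s$, so $r:=\abs{(\xi_1,\xi_2)}=2\rho/s$. This yields
\[
\int\abs{\nabla\wedge A_{\Tclass,d}}^{d/2}\dd x=(2d)^{d/2}2^{-d}\int_{\Sph^d}(1-r^2)^{d/4}\dd\omega,
\qquad
\int\abs{A_{\Tclass,d}}^d\dd x=d^d2^{-2d}\int_{\Sph^d}r^d\dd\omega.
\]
The pushforward of normalized surface measure on $\Sph^d$ to the $(\xi_1,\xi_2)$-disc has density proportional to $(1-r^2)^{(d-3)/2}$. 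With $t=r^2$, both averages become ratios of beta integrals:
\[
\frac{\mathrm B(1,\tfrac{d-1}{2}+\tfrac d4)}{\mathrm B(1,\tfrac{d-1}{2})}=\frac{2(d-1)}{3d-2}=b_d,
\qquad
\frac{\mathrm B(\tfrac d2+1,\tfrac{d-1}2)}{\mathrm B(1,\tfrac{d-1}2)}=G_d.
\]
This gives \eqref{eq:vector-trial-Ld-d} and \eqref{eq:vector-trial-energy-d}. The second form of $G_d$ in \eqref{eq:Gd-bd} follows from the duplication formula \eqref{eq:gamma-duplication}. The formulas \eqref{eq:vector-trial-powered-d} and \eqref{eq:vector-trial-unpowered-d} then follow by algebra.

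\emph{Comparison and conclusion.} The main obstacle is proving $\Theta_d<1$, which is equivalent to
\[
4\binom{2d}{d}(d-1)^2<(3d-2)^2[2(d-1)]^{d/2}.
\]
I would check this directly for $3\leq d\leq 9$; for instance $d=3$ gives $320<392$ and $d=4$ gives $2520<3600$. Then I would argue by induction. Passing from $d$ to $d+1$ multiplies the left side by at most $4\bigl(d/(d-1)\bigr)^2\leq 4\cdot(81/64)\approx5.1$ for $d\geq 9$, since $\binom{2d+2}{d+1}/\binom{2d}{d}<4$. The right side grows by at least $\sqrt{2d}\,(2d/(2d-2))^{d/2}\geq\sqrt{2d}\sqrt e\cdot c$ for an absolute $c\ge1$, which exceeds $6$ for $d\geq9$. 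Alternatively, one can use $\binom{2d}{d}\leq4^d/\sqrt{\pi d}$ directly. Finally, for odd $d$, comparing with \eqref{eq:LYDM-sphere} gives the strict inequality. By Lemma~\ref{lem:vector-conformal-invariance} and the gauge invariance of both the numerator and $\gnorm{\cdot}{d}$, the quotient is constant on conformal pullbacks, scalar multiples, and gradient shifts of $A_{\mathrm{LYDM},d}$. None of these can therefore be optimal.
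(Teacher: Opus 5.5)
Your proposal is correct and follows essentially the same route as the paper: the same pointwise formulas for $\abs{A_{\Tclass,d}}$ and $\abs{\nabla\wedge A_{\Tclass,d}}$, Lemma~\ref{lem:gauge} for the gauge identity, stereographic transplantation to $\Sph^d$ with the slicing density $(1-t)^{(d-3)/2}$, and the same beta integrals producing $G_d$ and $b_d$. The only real variation is in proving $\Theta_d<1$: you check $3\leq d\leq 9$ by hand and then run a ratio induction on the exact quantity $Z_d$ (using $(d/(d-1))^{d/2}\geq\sqrt e$), whereas the paper bounds $Z_d$ by the Wallis majorant $H_d$ and shows $H_d$ decreases from $d=4$, which avoids the case checks; both arguments are valid. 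One small inaccuracy: $g$ contains the factor $\rho^{d-2}$, so it is not smooth on the axis for odd $d$. This is harmless, because the Fubini argument only needs $\abs{A}^{d-1}\in L^1_{\loc}$.
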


\begin{proof}
Write $v=(-x_2,x_1,0,\ldots,0)$. Since
\begin{equation}\label{eq:vector-trial-nonlinear-d}
 \abs{A_{\Tclass,d}}^{d-2}A_{\Tclass,d}
 =d^{d-1}\rho^{d-2}s^{-2(d-1)}v,
\end{equation}
the divergence vanishes away from the axis $\{\rho=0\}$: the scalar
coefficient is constant along planar rotations, while
$\nabla\cdot v=0$. The Cartesian field on the right-hand side of
\eqref{eq:vector-trial-nonlinear-d} is locally $W^{1,1}$ across the axis
for $d\geq3$, so the identity also holds distributionally.
Since $A_{\Tclass,d}(x)=O(\abs{x}^{-3})$ and
$\nabla\wedge A_{\Tclass,d}(x)=O(\abs{x}^{-4})$ as $\abs{x}\to\infty$,
the field belongs to $Y_d$.
Lemma~\ref{lem:gauge}, with $n=q=d$, proves
\eqref{eq:vector-trial-gauge-d}.

For $j\geq3$, direct differentiation gives
\begin{align*}
 F_{12}&=2d\,s^{-3}(s-2\rho^2),\\
 F_{1j}&=-4d\,x_2x_j s^{-3},
 \qquad
 F_{2j}=4d\,x_1x_j s^{-3},
\end{align*}
where throughout this proof
$F_{ij}:=\partial_i(A_{\Tclass,d})_j-\partial_j(A_{\Tclass,d})_i$.
Writing $z=(x_3,\ldots,x_d)$, we obtain
\begin{align}
 \abs{\nabla\wedge A_{\Tclass,d}}^2
 &=(2d)^2s^{-6}\bigl((s-2\rho^2)^2+4\rho^2\abs{z}^2\bigr)=(2d)^2s^{-6}(s^2-4\rho^2).
 \label{eq:vector-trial-pointwise-d}
\end{align}

Let $\Phi:\R^d\to\Sph^d\setminus\{(0,\ldots,0,-1)\}$ be inverse
stereographic projection. In coordinates,
\begin{equation}\label{eq:stereographic-T-d}
\begin{aligned}
 y_j&=\frac{2x_j}{s}\quad(1\leq j\leq d),
 & y_{d+1}&=\frac{1-\abs{x}^2}{s},\\
 \dd\omega_{\Sph^d}&=2^ds^{-d}\dd x,
 & T:=y_1^2+y_2^2&=\frac{4\rho^2}{s^2},
\end{aligned}
\end{equation}
where $x \in \R^d$ and $y \in \Sph^d\setminus\{(0,\ldots,0,-1)\}$. The measure identity is understood under the change of variables
$y=\Phi(x)$. The Jacobian factor $2^ds^{-d}$ follows directly from the derivative matrix
of $\Phi$ (equivalently, from the fact that every tangent length is
multiplied by $2/s$).
Moreover,
$A_{\Tclass,d}^\flat=(d/4)\Phi^*(y_1\dd y_2-y_2\dd y_1)$,
so the trial potential comes from a multiple of the one-form dual to
the standard rotation field in the $(y_1,y_2)$-plane of $\Sph^d$.

For every integrable function $h:[0,1]\to\R$,
\begin{equation}\label{eq:spherical-slicing-d}
 \int_{\Sph^d}h(y_1^2+y_2^2)\dd\omega_{\Sph^d}
 =\frac{d-1}{2}\abs{\Sph^d}
   \int_0^1 h(t)(1-t)^{(d-3)/2}\dd t.
\end{equation}
Indeed, outside a set of surface measure zero we may parameterize $\Sph^d$
by
\[
 y=\bigl(\sqrt t\cos\theta,\sqrt t\sin\theta,
          \sqrt{1-t}\,\omega\bigr),
 \quad 0<t<1,\quad 0\leq\theta<2\pi,
 \quad \omega\in\Sph^{d-2}.
\]
The coordinate vectors in the $t$, $\theta$, and $\omega$ directions are
orthogonal, and the induced surface Jacobian is
\[
 \frac12(1-t)^{(d-3)/2}.
\]
Therefore
\[
 \int_{\Sph^d}h(y_1^2+y_2^2)\dd\omega_{\Sph^d}
 =\pi\abs{\Sph^{d-2}}
  \int_0^1h(t)(1-t)^{(d-3)/2}\dd t.
\]
Since
$\pi\abs{\Sph^{d-2}}=\frac{d-1}{2}\abs{\Sph^d}$,
formula \eqref{eq:spherical-slicing-d} follows.

Applying \eqref{eq:spherical-slicing-d} first with $h(t)=t^{d/2}$ and
then with $h(t)=(1-t)^{d/4}$ gives, purely by one-dimensional integration,
\begin{align}
 \frac1{\abs{\Sph^d}}\int_{\Sph^d}T^{d/2}\dd\omega_{\Sph^d}
 &=\frac{d-1}{2}
   \mathrm B\left(1+\frac d2,\frac{d-1}{2}\right)=\frac{\Gamma(1+d/2)\Gamma((d+1)/2)}{\Gamma(d+1/2)}
 =G_d,
 \notag \\
 \frac1{\abs{\Sph^d}}\int_{\Sph^d}(1-T)^{d/4}\dd\omega_{\Sph^d}
 &=\frac{d-1}{2}
   \int_0^1(1-t)^{(d-3)/2+d/4}\dd t=\frac{2(d-1)}{3d-2}=b_d.
 \label{eq:one-minus-T-integral-d}
\end{align}

Finally, \eqref{eq:gamma-duplication} with $z=(d+1)/2$ gives
\[
 \Gamma(1+d/2)\Gamma((d+1)/2)
 =2^{-d}\sqrt\pi\,\Gamma(d+1).
\]
The same formula with $z=d+1/2$ reads
$\Gamma(d+1/2)\Gamma(d+1)=2^{-2d}\sqrt\pi\,\Gamma(2d+1)$.
Dividing the two identities yields
\begin{equation*}
 G_d
 =\frac{2^d\Gamma(d+1)^2}{\Gamma(2d+1)}
 =\frac{2^d}{\binom{2d}{d}},
\end{equation*}
which explains the second expression in \eqref{eq:Gd-bd}.

Since $\abs{A_{\Tclass,d}}=d\rho s^{-2}$,
the identities $\rho=(s/2)\sqrt T$ and
\eqref{eq:stereographic-T-d} give
\begin{align*}
 \int_{\R^d}\abs{A_{\Tclass,d}}^d\dd x
 &=d^d2^{-d}\int_{\R^d}s^{-d}T^{d/2}\dd x=d^d2^{-2d}\int_{\Sph^d}T^{d/2}\dd\omega_{\Sph^d}=d^d2^{-2d}\abs{\Sph^d}G_d,
\end{align*}
which proves \eqref{eq:vector-trial-Ld-d}.

Likewise, \eqref{eq:vector-trial-pointwise-d} becomes
\[
 \abs{\nabla\wedge A_{\Tclass,d}}^{d/2}
 =(2d)^{d/2}s^{-d}(1-T)^{d/4}.
\]
Thus \eqref{eq:stereographic-T-d} and
\eqref{eq:one-minus-T-integral-d} yield directly
\begin{align*}
 \int_{\R^d}\abs{\nabla\wedge A_{\Tclass,d}}^{d/2}\dd x
 &=(2d)^{d/2}2^{-d}
   \int_{\Sph^d}(1-T)^{d/4}\dd\omega_{\Sph^d}=(2d)^{d/2}2^{-d}\abs{\Sph^d}b_d,
\end{align*}
which proves \eqref{eq:vector-trial-energy-d}.
Dividing the two integrals and using
\eqref{eq:vector-trial-gauge-d} proves
\eqref{eq:vector-trial-powered-d}--\eqref{eq:vector-trial-unpowered-d}.

It remains to prove the strict comparison. Put $Z_d:=\Theta_d^d$. By
\eqref{eq:Gd-bd},
\begin{equation*}
 Z_d=\frac{4\binom{2d}{d}(d-1)^2}
 {(3d-2)^2[2(d-1)]^{d/2}}.
\end{equation*}
The Wallis estimate $\binom{2d}{d}<4^d/\sqrt{\pi d}$ and
$(d-1)/(3d-2)<1/3$ imply
\begin{equation*}
 Z_d<H_d:=\frac{4}{9\sqrt{\pi d}}
 \left(\frac{8}{d-1}\right)^{d/2}.
\end{equation*}
For $d\geq4$,
\begin{equation*}
 \frac{H_{d+1}}{H_d}
 =\sqrt{\frac{8}{d+1}}
   \left(1-\frac1d\right)^{d/2}
 <\sqrt{\frac{8}{e(d+1)}}<1.
\end{equation*}
Also $H_4=128/(81\sqrt\pi)<1$. Hence $Z_d<1$ for all $d\geq4$,
while direct substitution gives $Z_3=40/49<1$. This proves
\eqref{eq:Theta-d}. In odd dimensions, comparison with
\eqref{eq:LYDM-sphere} proves the strict comparison. The conclusion for
the whole family follows from Lemma~\ref{lem:vector-conformal-invariance},
homogeneity of the quotient in $A$, and gauge invariance.
\end{proof}

\begin{Rem}
The field $A_{\Tclass,d}$, its exact quotient, and the strict numerical
comparison in \eqref{eq:Theta-d} are valid also for even $d$. In even
dimensions, however, there is no Loss--Yau--Dunne--Min candidate of the
form considered by Frank and Loss, so the theorem gives an explicit upper
bound for $S_d^v$ rather than the disproof of an optimizer conjecture.
Wang and Zhang had already established nonoptimality of the conformal
Loss--Yau--Dunne--Min family in every odd dimension by their spherical
second-variation argument \cite[Theorem~1.3]{WangZhangConstants}.
On the nonlinear-gauge slice
$\nabla\cdot(\abs{A}^{d-2}A)=0$, their functional is related to our two
normalizations by
\[
 J_{d/2,1}(A)
 =\frac{\left(\int_{\R^d}\abs{\nabla\wedge A}^{d/2}\dd x\right)^2}
        {\int_{\R^d}\abs{A}^d\dd x}
 =(R_d^v(A))^d=(\mathcal Q_d^v(A))^2.
\]
Here Lemma~\ref{lem:gauge} identifies the denominator with
$\gnorm{A}{d}^d$; without the nonlinear-gauge condition, this identity
need not hold. The present theorem supplies a particularly simple
explicit competitor whose quotient is evaluated in closed form.
\end{Rem}

\section{The dimension-three azimuthal field}\label{sec:azimuthal-field}

For $d=3$, the field $A_{\Tclass,3}$ in \eqref{eq:vector-trial-d} is exactly
the field below. We retain the direct three-dimensional computation in
classical curl notation because it also verifies the symmetry needed for
Question~\ref{question:WZ-symmetric}.

Set
\[
 \rho^2:=x_1^2+x_2^2,
 \qquad r^2:=\abs{x}^2=\rho^2+x_3^2,
 \qquad s:=1+r^2,
\]
and consider
\begin{equation*}
 A_{\Tclass}(x):=\frac{3}{s^2}(-x_2,x_1,0).
\end{equation*}
Its relation to the construction of Wang and Zhang is particularly simple.
Define
\[
 A_\pm(x):=\frac{3}{s^2}
 \bigl((1-r^2)e_3+2x_3x\pm2(e_3\wedge x)\bigr).
\]
After a rotation of coordinates, $A_+^\flat$ and $A_-^\flat$ are precisely
the Euclidean representatives of the forms $\xi$ and $\bar\xi$ in
\cite[formulas~(6.4) and~(6.7)]{WangZhangConstants}; $A_+$ is the Loss--Yau potential
with $w=e_3$. Since $e_3\wedge x=(-x_2,x_1,0)$, subtraction gives
\begin{equation}\label{eq:AT-reflected-twin}
 A_{\Tclass}=\frac14(A_+-A_-).
\end{equation}
Thus, up to a nonzero scalar factor, the three-dimensional trial field is
a specific member of their competitor family. We spell out how their
argument already implies strict comparison for this choice.

If $R\in\mathrm{SO}(2)$ acts on the $(x_1,x_2)$-plane and fixes the
$x_3$-axis, then
\begin{equation}\label{eq:AT-equivariance}
 A_{\Tclass}(Rx)=R A_{\Tclass}(x).
\end{equation}
Equivalently, $A_{\Tclass}$ is fixed by the induced action
$A\mapsto R A(R^{-1}\,\cdot)$. Up to a rotation of the distinguished axis,
this is the invariant class considered in the final question of
\cite{WangZhangConstants}. The same equivariance holds for both $A_+$
and $A_-$, with the same distinguished axis.

To apply their direct comparison, write $\xi$ and $\bar\xi$ for the
corresponding forms on the unit sphere $\Sph^3$. With the stereographic
orientation for which $\operatorname{curl}_{\Sph^3}\xi=2\xi$, one has
\[
 \operatorname{curl}_{\Sph^3}\bar\xi=-2\bar\xi,
 \qquad \abs{\xi}=\abs{\bar\xi}=\frac32.
\]
The spherical inner product, expressed in stereographic coordinates, is
\[
 \ip{\xi}{\bar\xi}_{\Sph^3}
 =\frac94\left(1-\frac{8\rho^2}{s^2}\right),
\]
which is nonconstant. In the proof of
\cite[Proposition~5.1, formula~(5.13)]{WangZhangConstants}, equality holds
precisely when $\ip{\xi}{\varphi_{-1}}\equiv0$; the denominator retains
the infimum over exact forms throughout their estimates~(5.10)--(5.12).
Thus $\varphi_{-1}=-\bar\xi$ yields strict inequality, and
$J_2(\xi-\bar\xi)<J_2(\xi)$ on the sphere.
Conformal invariance and homogeneity, together with
\eqref{eq:AT-reflected-twin}, give
$J_2(A_{\Tclass})<J_2(A_{\mathrm{LY}})$.
Thus the negative answer to Question~\ref{question:WZ-symmetric} follows
already from their comparison after this choice of competitor and the
symmetry check. The calculation below additionally gives the exact
quotient by working directly in Euclidean coordinates.

Since
\[
 \abs{A_{\Tclass}(x)}=\frac{3\rho}{s^2},
 \qquad
 \abs{A_{\Tclass}}A_{\Tclass}
 =9\rho s^{-4}(-x_2,x_1,0),
\]
put $f(x):=9\rho s^{-4}$. On the set $\{\rho>0\}$ the function $f$
depends on $(x_1,x_2)$ only through $\rho$, and therefore
\[
 \nabla\cdot\bigl(f(-x_2,x_1,0)\bigr)
 =-x_2\partial_1f+x_1\partial_2f=0.
\]
Moreover, $f(-x_2,x_1,0)$ extends across $\{\rho=0\}$ as a locally
$W^{1,1}$ vector field (in fact its first derivatives are locally bounded
near the axis). Hence the almost-everywhere identity extends to the sense
of distributions, and
\begin{equation*}
 \nabla\cdot(\abs{A_{\Tclass}}A_{\Tclass})=0.
\end{equation*}
Lemma~\ref{lem:gauge} then gives
$\gnorm{A_{\Tclass}}{3}=\norm{A_{\Tclass}}{3}$.

Since $\abs{A_{\Tclass}}=3\rho/s^2$, spherical coordinates yield
\begin{align}
 \norm{A_{\Tclass}}{3}^3
 &=27\int_{\R^3}\frac{\rho^3}{(1+r^2)^6}\dd x=54\pi
 \left(\int_0^\pi\sin^4\vartheta\dd\vartheta\right)
 \left(\int_0^\infty\frac{r^5}{(1+r^2)^6}\dd r\right)
 =\frac{27\pi^2}{80}.
 \label{eq:AT-L3}
\end{align}
A direct computation gives
\begin{equation*}
 \nabla\wedge A_{\Tclass}
 =\frac{6}{s^3}
 (2x_1x_3,2x_2x_3,1+x_3^2-\rho^2).
\end{equation*}
Moreover,
\[
 4x_3^2\rho^2+(1+x_3^2-\rho^2)^2=s^2-4\rho^2,
\]
and hence
\begin{equation*}
 \abs{\nabla\wedge A_{\Tclass}}
 =\frac{6}{s^3}\sqrt{s^2-4\rho^2}.
\end{equation*}
Therefore
\begin{equation}\label{eq:J-definition}
 \norm{\nabla\wedge A_{\Tclass}}{3/2}^{3/2}
 =6^{3/2}J,
 \qquad
 J:=\int_{\R^3}\frac1{s^3}
 \left(1-\frac{4\rho^2}{s^2}\right)^{3/4}\dd x.
\end{equation}

We now compute $J$ by a direct change of variables in $\R^3$, obtained
from the standard torus coordinates on $\Sph^3$ by stereographic projection. For
$0<t<1$ and $0\leq\theta,\varphi<2\pi$, set
\[
 D(t,\varphi):=1+\sqrt{1-t}\,\sin\varphi
\]
and
\begin{equation}\label{eq:analytic-change-J}
 x_1=\frac{\sqrt t\cos\theta}{D(t,\varphi)},
 \qquad
 x_2=\frac{\sqrt t\sin\theta}{D(t,\varphi)},
 \qquad
 x_3=\frac{\sqrt{1-t}\cos\varphi}{D(t,\varphi)}.
\end{equation}
Apart from sets of Lebesgue measure zero, this parametrizes $\R^3$ once.
Indeed, if $\rho>0$ and
\[
 t:=\frac{4\rho^2}{s^2},
\]
then $0<t<1$ except on a null set, while $\theta$ is the polar angle of
$(x_1,x_2)$ and $\varphi$ is determined by
\[
 \cos\varphi=\frac{2x_3}{s\sqrt{1-t}},
 \qquad
 \sin\varphi=\frac{1-r^2}{s\sqrt{1-t}}.
\]
The last two quantities have squares summing to one because
\[
 4x_3^2+(1-r^2)^2=s^2-4\rho^2=s^2(1-t).
\]
For the variables in \eqref{eq:analytic-change-J}, a direct calculation
gives
\[
 r^2=\frac{1-\sqrt{1-t}\sin\varphi}
          {1+\sqrt{1-t}\sin\varphi},
 \qquad
 s=1+r^2=\frac{2}{D(t,\varphi)},
 \qquad
 \frac{4\rho^2}{s^2}=t.
\]
To compute the Jacobian explicitly, write
\[
 R(t,\varphi):=\frac{\sqrt t}{D(t,\varphi)},
 \qquad
 Z(t,\varphi):=\frac{\sqrt{1-t}\cos\varphi}{D(t,\varphi)},
\]
so that $(x_1,x_2,x_3)=(R\cos\theta,R\sin\theta,Z)$. Hence
\[
 \left|
 \det\frac{\partial(x_1,x_2,x_3)}
          {\partial(t,\theta,\varphi)}
 \right|
 =R\left|R_tZ_\varphi-R_\varphi Z_t\right|.
\]
Put $a:=\sqrt t$ and $b:=\sqrt{1-t}$, so $D=1+b\sin\varphi$.
Direct differentiation gives
\[
 R_t=\frac{b+\sin\varphi}{2abD^2},
 \qquad
 R_\varphi=-\frac{ab\cos\varphi}{D^2},
 \qquad
 Z_t=-\frac{\cos\varphi}{2bD^2},
 \qquad
 Z_\varphi=-\frac{b(b+\sin\varphi)}{D^2}.
\]
Therefore
\begin{align*}
 R_tZ_\varphi-R_\varphi Z_t
 &=-\frac{(b+\sin\varphi)^2+a^2\cos^2\varphi}
          {2aD^4}
 =-\frac{1}{2aD^2},
\end{align*}
because $a^2+b^2=1$. Since $R=a/D$, we have proved
\begin{equation*}
 \left|
 \det\frac{\partial(x_1,x_2,x_3)}
          {\partial(t,\theta,\varphi)}
 \right|
 =\frac{1}{2D(t,\varphi)^3}.
\end{equation*}
Consequently,
\[
 s^{-3}\dd x
 =\frac{D(t,\varphi)^3}{8}
  \frac{1}{2D(t,\varphi)^3}
  \dd t\dd\theta\dd\varphi
 =\frac1{16}\dd t\dd\theta\dd\varphi.
\]
Substituting this into \eqref{eq:J-definition}, we obtain the elementary
one-dimensional integral
\begin{align*}
 J
 &=\frac1{16}\int_0^{2\pi}\int_0^{2\pi}\int_0^1
   (1-t)^{3/4}\dd t\dd\theta\dd\varphi=\frac{\pi^2}{4}\int_0^1(1-t)^{3/4}\dd t
 =\frac{\pi^2}{4}\cdot\frac47
 =\frac{\pi^2}{7}.
\end{align*}
Thus the value of $J$ follows from an explicit change of variables and
a Jacobian calculation in $\R^3$, independently of the spherical
integration formula used in Section~\ref{sec:FL-vector-all-d}.
Consequently,
\begin{equation}\label{eq:AT-curl-norm}
 \norm{\nabla\wedge A_{\Tclass}}{3/2}
 =6\left(\frac{\pi^2}{7}\right)^{2/3}.
\end{equation}
Combining \eqref{eq:AT-L3} and \eqref{eq:AT-curl-norm}, we obtain
\begin{equation}\label{eq:AT-quotient}
\begin{aligned}
 \frac{\norm{\nabla\wedge A_{\Tclass}}{3/2}}
      {\gnorm{A_{\Tclass}}{3}}
 &=2\left(\frac{80\pi^2}{49}\right)^{1/3}=2\left(\frac{40}{49}\right)^{1/3}\abs{\Sph^3}^{1/3}
 <2\abs{\Sph^3}^{1/3}.
\end{aligned}
\end{equation}
Thus $C_v^{\Tclass}$ satisfies the upper bound in
\eqref{eq:main-Cv}. Raising \eqref{eq:AT-quotient} to the power $3/2$ gives
\begin{equation*}
 \frac{\norm{\nabla\wedge A_{\Tclass}}{3/2}^{3/2}}
      {\gnorm{A_{\Tclass}}{3}^{3/2}}
 =\frac{8\sqrt{10}}7\pi<4\pi.
\end{equation*}
In view of \eqref{eq:AT-equivariance}, $A_{\Tclass}$ is fixed by $G$.
Moreover, \eqref{eq:AT-quotient} and \eqref{eq:FL-candidates} give directly
\begin{align*}
 J_2(A_{\Tclass})
 =4\left(\frac{80\pi^2}{49}\right)^{2/3}=\left(\frac{40}{49}\right)^{2/3}
   J_2(A_{\mathrm{LY}})<J_2(A_{\mathrm{LY}}).
\end{align*}
Together with \eqref{eq:LY-equivariance}, this gives a negative answer to
Question~\ref{question:WZ-symmetric}.
This proves the vector-field part of Theorem~\ref{thm:main}.

\section{Consequences and final remarks}

As noted above, Wang and
Zhang had already obtained the corresponding nonoptimality statements by
direct comparison and instability arguments
\cite{WangZhangSpinor,WangZhangConstants}.
Theorem~\ref{thm:FL-spinor-all-d} gives a particularly simple alternative for
the spinorial conjecture in \cite{FrankLossSharp}. In
$d=3$, the conformal spinors in that family all have quotient
$3\sqrt3\pi/2$ in the power normalization, whereas the constant-direction
Aubin--Talenti spinor has the strictly smaller quotient $\sqrt{18\pi}$.
Similarly, Theorem~\ref{thm:FL-vector-all-d} gives a single explicit
formula for a vector competitor in every $d\geq3$. It disproves the
Loss--Yau--Dunne--Min optimizer conjecture in every odd dimension, while in
even dimensions it gives the quantitative upper bound
\eqref{eq:vector-trial-powered-d} for $S_d^v$.
The estimates also imply
\begin{align*}
 C_s C_v
 &\leq 2\left(\frac{1440\pi^3}{49}\right)^{1/3}=\left(\frac{320}{147\pi}\right)^{1/3}
 3\abs{\Sph^3}^{2/3}
 <3\abs{\Sph^3}^{2/3}.
\end{align*}
Since the Sobolev constant in the normalization of \cite{FrankLossZero} is
\[
 S_3=\frac34\abs{\Sph^3}^{2/3},
\]
the value $3\abs{\Sph^3}^{2/3}$ equals $4S_3$.
The strategy discussed in \cite[Remark~9.1]{FrankLossZero} combines
the two separate inequalities. The strict upper bound above shows that
their optimal constants alone cannot recover the sharp zero-mode
threshold $4S_3$ by this strategy.

It is important to distinguish the gauge quotient $J_2$ in
\eqref{eq:WZ-J2} from the helicity quotient treated sharply in
\cite[Corollary~1.2]{WangZhangSharp}. For fields of positive helicity, Wang and Zhang prove
\begin{equation*}
 \frac{\left(\displaystyle\int_{\R^3}
       \abs{\nabla\wedge A}^{3/2}\dd x\right)^{4/3}}
      {\displaystyle\int_{\R^3}A\cdot(\nabla\wedge A)\dd x}
 \geq2\abs{\Sph^3}^{1/3},
\end{equation*}
with equality for nonzero scalar multiples of the Loss--Yau potential, up to
orientation-preserving conformal and gauge transformations.
This result neither determines $C_v$ nor contradicts the strict
inequality for $J_2$.

For smooth zero-mode pairs on the round sphere $\Sph^3$, Wang and Zhang
also prove a sharp magnetic-field bound
\cite[Theorem~1.6 and Section~8]{WangZhangSharp}.
More precisely, let $a$ be a smooth real vector field and let
$\varphi\not\equiv0$ be a smooth spinor satisfying the magnetic Dirac
equation on $\Sph^3$. Let $A^\flat$ be the pullback of $a^\flat$ to
$\R^3$ by inverse stereographic projection, with the corresponding
conformal transformation of the spinor. Conformal covariance then gives
\begin{equation*}
 \norm{\nabla\wedge A}{3/2}
 \geq3\abs{\Sph^3}^{2/3}=4S_3.
\end{equation*}
Equality is attained by the Killing-spinor/Reeb-field pairs described in
their theorem and their conformal and gauge transforms. Thus the sharp
threshold in this conformally compactifiable class does not arise by
multiplying the separate optimal constants $C_s$ and $C_v$.
In their subsequent work
\cite[Theorem~1.1 and Section~5.5]{WangZhangCriterion}, Wang and Zhang
establish lower bounds for the magnetic field of Dirac zero modes on
$\Sph^d$, $d\geq3$, assuming that the spinor belongs to $L^p$ for some
$d/(d-1)<p<\infty$ and that $\exd a^\flat\in L^{d/2}$.
The bound in their Theorem~1.1 is attained in odd dimensions; a stronger
estimate in dimension four is given in their Theorem~5.3.
These results concern the coupled magnetic Dirac equation and do not
determine the separate Sobolev constants $S_d^s$ and $S_d^v$.
No extension of the preceding spherical bounds to arbitrary weak Euclidean
zero-mode pairs is needed for the results proved here.

\medskip
\noindent\textbf{Acknowledgments.}
J. Mederski was partly supported by the National Science Centre, Poland,
under grant no.~2023/51/B/ST1/00968.

\medskip
\noindent\textbf{AI assistance statement.}
The authors used AI tools to assist with manuscript editing and checking
mathematical arguments developed by the authors. All proofs were independently
verified by the authors, who take full responsibility for the mathematical
correctness and final content of the manuscript.

\end{document}